\documentclass{siamart}

\usepackage{tikz}
\usepackage{pgfplots}
\usetikzlibrary{plotmarks,patterns}
	\newlength\figureheight 
	\newlength\figurewidth 
	\setlength\figureheight{0.4\textwidth} 
	\setlength\figurewidth{0.5\textwidth}
 \pgfplotsset{
     tick label style={font=\footnotesize},
     label style={font=\footnotesize},
     legend style={font=\scriptsize}
 }

\usepackage{color}

\definecolor{blockbg}{RGB}{195,195,195}
\definecolor{tblockbg}{RGB}{0,140,195}
\definecolor{eblockbg}{RGB}{60,179,113}
\definecolor{ablockbg}{RGB}{240,90,90}

\usepackage{graphicx}  
\usepackage{psfrag}    
\usepackage{url}       
\usepackage{amssymb,amsmath,amscd,amsfonts,amstext,bbm, enumerate, ntheorem}
\usepackage{array}
\usepackage{multirow}
\usepackage[latin1]{inputenc}
\usepackage{eclbkbox}
\usepackage{multicol}

\usepackage{caption}
\usepackage{subcaption}
\usepackage{graphicx}



\newcommand{\op}[1]{\ifmmode\mathsf{#1}\else\textsf{#1}\fi}
\newcommand{\mat}[1]{\ifmmode\mathbf{#1}\else\textbf{#1}\fi}

\DeclareMathOperator*{\argmin}{argmin}


%

\newcommand\tran{\mathrm{T}}
\newcommand\fix{\mathrm{fix}}

\newtheorem{remark}{Remark}
\newtheorem{example}{Example}


\begin{document}

\title{A Generic online acceleration scheme for Optimization algorithms via Relaxation and Inertia}
\author{F. Iutzeler and J. M. Hendrickx \thanks{F.I. is with LJK, Universit\'{e} Grenoble Alpes, Grenoble, France. J.H. is with ICTEAM, Universit\'{e} Catholique de Louvain, Louvain-la-Neuve, Belgium. This project was conducted while F.I. was a post-doctoral researcher at UCL and is supported by the Belgian Network DYSCO, funded by the Belgian government and the Concerted Research Action (ARC) of the French Community of Belgium.} }

\date{\today}

\markboth{Draft}{}

\maketitle

\begin{abstract}
We propose generic acceleration schemes for a wide class of optimization and iterative schemes based on relaxation and inertia. In particular, we introduce methods that automatically tunes the acceleration coefficients online, and establish their convergence. This is made possible by considering the class of fixed-points iterations over averaged operators which encompass gradient methods, ADMM, primal dual algorithms, an so on. 



\end{abstract}

\begin{keywords}
Applied Optimization Methods, Relaxation, Inertia, Acceleration.
\end{keywords} 



\section{Introduction} 
\label{sec:intro}

A large class of optimization algorithms can be cast as {fixed-point} iterations in the sense that they consist in applying the same operation successively in order to converge to a fixed point of this operation. For the gradient algorithm on a differentiable function $f$, the operation consists in applying the identity minus the gradient of $f$, and the fixed point reached nulls the gradient of $f$. The convergence of such fixed-points iterations can be proven by finding a suitable contraction property, for which the \emph{monotone operators} provide an attractive framework. They also provide an elegant framework to derive splitting algorithms such as the Alternating Direction Method of Multipliers (ADMM) \cite{lions1979splitting}, or, more recently, primal-dual algorithms \cite{condat2013primal}, and randomized or distributed optimization algorithms \cite{iut-cdc13,wei-ozd-arxiv13,shi-etal-(arxiv)13,iutzeler2013explicit,bianchi2015stochastic}.

In order to accelerate the convergence of fixed point algorithms and in particular optimization methods, there exists a variety of modifications based on the construction of the next iterate by combining the output of the operation with former outputs or iterates. We focus here on the two main modification schemes: \emph{relaxation} and \emph{inertia}.

 {\tiny $\blacksquare$} \emph{Relaxation} combines the output of the operation with the former iterate as
$$ x_{k+1} =  \eta \mathsf{T}(x_k) + (1-\eta) x_k $$
where $\eta$ is some positive parameter. This modification notably appears in Richardson's method for solving linear systems \cite{richardson1911approximate}, and in Krasnoselski\u{\i}--Mann monotone operators convergence theorem. For the gradient algorithm, relaxation amounts to modifying the step-size. For ADMM, the benefits of relaxation are often reduced to the phrase ``experiments [...] suggest that over-relaxation with $\eta \in [1.5,1.8]$ can improve convergence.''  (see \cite{eckstein1992douglas} and \cite[Chap. 3.4.3]{boyd2011distributed}) except in specific cases \cite{ghadimi2013}.

{\tiny $\blacksquare$} \emph{Inertia} on the other side is performed by combining the output of the operation with the former output. An \emph{inertial} iteration for  operator $\mathsf{T}$ writes
\begin{align*}
 \left\{ \begin{array}{l} x_{k+1} =  \mathsf{T}(y_k) \\ y_{k+1} = x_{k} + \gamma(x_{k} - x_{k-1} ) \end{array}  \right.
\Leftrightarrow ~ x_{k+1} =  \mathsf{T}\left( x_{k} + \gamma(x_{k} - x_{k-1} )\right) 
\end{align*}
where $\gamma$ is some positive parameter. This modification was made immensely popular by Nesterov's accelerated gradient algorithm \cite{nesterov1983method}. More recently extensions of this method to proximal gradient (FISTA \cite{beck2009fast}) and ADMM (Fast ADMM \cite{goldstein2014fast}) were proposed and quite popular themselves.

However, despite the popularity of these methods, proving the convergence of the iterates sequence $(x_k)$ is still an issue in many situations (see \emph{e.g.} \cite{chambolle2014convergence,attouch2015rate} for the case of FISTA) and additional restart mechanisms may have to be implemented to improve the convergence properties \cite{o2013adaptive}. Finally, the key problem when using these methods is \emph{tuning efficiently} their parameters. Indeed, ``good'', if not optimal, parameters depend on a variety of elements including the algorithm itself (an optimal parameter for the gradient may make the ADMM divergent for instance) or function parameters in the case of optimization (often through the strong convexity constant which may be hard to estimate \cite{lin2014adaptive} or maladjusted to local analysis \cite{tao2015local}).

\noindent\textbf{Contributions.} In this paper, our aim is to  propose \emph{online acceleration methods} for a general class of fixed point algorithms that encompasses the aforementioned optimization methods. The idea of generic acceleration using inertia was investigated in the sub-linear case in \cite{2015arXiv150602186L} by sequentially solving well-chosen strongly convex approximations of the original problem or in \cite{lin2014adaptive,giselsson2016line} which are based on line-search.

Our approach is to based on the monotone operators framework and more precisely on the \emph{averaging contraction property}, verified by a large class of algorithms such as (proximal) gradient algorithms and, very interestingly, ADMM and recent primal-dual algorithms for which only seldom results exists concerning the choice of relaxation or inertial parameters. More precisely, we begin by considering the particular case of affine operators ($\op{T}(x) = Rx+d$ where $R$ is a matrix and $d$ a vector) and study how theses modifications translate for the spectrum of the linear part and thus for the convergence rate. This spectral characterization makes possible the derivation of optimal parameters (in the linear case) and gives us useful guidelines for the general case. Our online acceleration methods are based on approximating the base algorithm by an affine operator at each iteration and choosing the acceleration parameter as the optimal one for the linear approximation. Finally, we illustrate the performance of our online acceleration methods for the proximal gradient algorithm, ADMM\footnote{For ADMM, this led us to develop a new \emph{Inertial ADMM}, different from Fast ADMM \cite{goldstein2014fast}, build on the monotone operator formulation (see \cite{eckstein1992douglas} and references therein). }, and a primal-dual algorithm on popular lasso and logistic regression problems.

The paper is organized as follows. In Section~\ref{sec:prob}, we introduce the averaged operators framework and related useful lemmas. In Section~\ref{sec:cvreliner}, we formulate {Relaxation}, {Inertia}, and {Alternated Inertia} as modifications on the fixed-point iterations on averaged operators; we provide a coherent set of results concerning convergence (in the general case) and linear rate in the case of affine operators. In Section~\ref{sec:acc}, based on the previous analysis, we derive and prove the convergence of our online acceleration methods. These algorithms are based on the general operator framework and thus fit a large variety of optimization algorithms. Finally, Section~\ref{sec:app} is devoted to numerical illustrations.


\section{Fixed-point Algorithms}
\label{sec:prob}

\subsection{Averaged Operators}

Let $\mathsf{T}$ be a mapping\footnote{For the sake of clarity, we only discuss single-valued mappings in finite dimensional spaces; further results on monotone operators theory can be found in \cite{livre-combettes}. } on $\mathbb{R}^N$.  $\mathsf{T}$ is said \emph{monotone} if   $ \forall x,y\in\mathbb{R}^N$, $
\langle x - y  ; \op{T}(x) - \op{T}(y) \rangle \geq 0$. For $\alpha\in]0,1[$, $\mathsf{T}$ is said \emph{$\alpha$-averaged} iff
\begin{equation*}
\forall x,y\in\mathbb{R}^N, ~~ \|\mathsf{T}(x) - \mathsf{T}(y)\|^2 +\frac{1-\alpha}{\alpha} \| ( \mathsf{I} - \mathsf{T} ) (x) - ( \mathsf{I} - \mathsf{T} ) (y) \|^2 \leq \|x-y\|^2 
\end{equation*}
and $\mathsf{T}$ is said to be \emph{Firmly Non-Expansive (FNE)} if it is $1/2$-averaged. The set of the fixed points of $\op{T}$ will be denoted by $\fix \op{T} = \left\{\bar{x} : \bar{x} = \mathsf{T}\bar{x} \right\}$. 

For instance, if $f$ is a convex function, then its subgradient $\partial f$ is monotonous. $\mathsf{J} = ( I + \partial f )^{-1}$ is FNE.
Furthermore, if its gradient $\nabla f$ is $L$-Lipschitz continuous, $\op{G} = I - (1/L) \nabla f$ is also FNE. Both the fixed points of $\mathsf{J}$ and $\op{G}$ coincide with the points where $0$ belong to $\partial f$ \emph{i.e.} the minimizers of $f$. Similar derivations can be performed for a large class of algorithms such as the proximal gradient, the ADMM, etc.

\begin{lemma}[{Krasnoselski\u{\i}--Mann algorithm}] \emph{\cite[Prop.~5.15]{livre-combettes}}
\label{lem:cv}
Let $\alpha\in]0,1[$. Let $\mathsf{T}$ be an $\alpha$-averaged operator such that $\fix \mathsf{T} \neq \emptyset$. Then, the sequence $(x_k)_{k>0}$ generated by $x^0 \in\mathbb{R}^N$ and the iterations
$$ x_{k+1} = \mathsf{T} (x_k) $$
converges to a point in $\fix \mathsf{T}$.
\end{lemma}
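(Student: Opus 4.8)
The plan is to extract everything from the defining $\alpha$-averaging inequality by testing it against a fixed point: a single application yields both Fej\'er monotonicity and asymptotic regularity, and in finite dimensions a subsequence argument then upgrades these to convergence. First I would fix any $\bar{x}\in\fix\mathsf{T}$ and apply the averaging inequality with $y=\bar{x}$. Since $\bar{x}$ is a fixed point we have $(\mathsf{I}-\mathsf{T})(\bar{x})=0$, and by construction $\mathsf{T}(x_k)=x_{k+1}$ while $(\mathsf{I}-\mathsf{T})(x_k)=x_k-x_{k+1}$, so the inequality collapses to
\[
\|x_{k+1}-\bar{x}\|^2 + \frac{1-\alpha}{\alpha}\,\|x_k - x_{k+1}\|^2 \leq \|x_k - \bar{x}\|^2 .
\]
This one estimate does most of the work.

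Second, I would read off its two consequences. Dropping the nonnegative middle term gives $\|x_{k+1}-\bar{x}\|\leq\|x_k-\bar{x}\|$, so the sequence $(\|x_k-\bar{x}\|)_k$ is nonincreasing, hence convergent, and $(x_k)_k$ is bounded. Summing the inequality over $k$ telescopes the outer terms and bounds $\frac{1-\alpha}{\alpha}\sum_k\|x_k-x_{k+1}\|^2\leq\|x_0-\bar{x}\|^2<\infty$; here I use $\alpha\in\,]0,1[$ so the coefficient is strictly positive. The summability forces $\|x_k-x_{k+1}\|=\|x_k-\mathsf{T}(x_k)\|\to 0$, i.e. asymptotic regularity of the iteration.

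Third, to turn these into genuine convergence I would extract from the bounded sequence a convergent subsequence $x_{k_j}\to x^\star$. An $\alpha$-averaged operator is nonexpansive, hence continuous, so passing to the limit in $x_{k_j}-\mathsf{T}(x_{k_j})\to 0$ yields $x^\star=\mathsf{T}(x^\star)$, that is $x^\star\in\fix\mathsf{T}$. Finally I would re-run the Fej\'er argument of the second step with the specific choice $\bar{x}=x^\star$: the sequence $(\|x_k-x^\star\|)_k$ then converges, and since one of its subsequences tends to $0$, the limit must be $0$, so the whole sequence $(x_k)_k$ converges to $x^\star\in\fix\mathsf{T}$.

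The step I expect to be the main obstacle is the third one, namely certifying that the subsequential limit is actually a fixed point. This is exactly where finite dimensionality enters: continuity of the nonexpansive $\mathsf{T}$ lets me close the limit directly, whereas in an infinite-dimensional Hilbert setting the same conclusion would instead require the demiclosedness of $\mathsf{I}-\mathsf{T}$ at $0$. Everything else is the routine bookkeeping of combining monotone convergence of $(\|x_k-\bar{x}\|)_k$ with the vanishing of the residuals.
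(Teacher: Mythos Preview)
Your argument is correct and is essentially the standard proof; the paper itself does not prove this lemma but simply cites \cite[Prop.~5.15]{livre-combettes}, and in Appendix~\ref{apx:linear} only invokes from that proof the very summability $\sum_k\|x_k-\mathsf{T}(x_k)\|^2<\infty$ that you derive. So there is nothing to compare beyond noting that your write-up matches the cited reference, specialized to $\mathbb{R}^N$ where strong compactness replaces the demiclosedness argument.
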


\begin{remark}
\label{rem:fejer}
The iterations produced by averaged operators give \emph{Fej\'er monotonous} iterates sequences $(x_k)_{k>0}$: for any fixed point $\bar{x}$ and iteration $k$, 
$$  \left\| x_{k+1} - \bar{x} \right\| \leq \left\| x_{k} - \bar{x} \right\|; $$
we will investigate this attractive property for the modifications considered.
\end{remark}

\subsection{Linear Convergence of Affine Operators} 
\label{sec:linear} 

We now give a precise characterization of the spectral signification of the averaging property for an affine operator. This will be useful to investigate the relaxation and inertia and will lead to our online algorithms for the general class of averaged operators. 

Results of the literature include analyses of matrices with subdominant eigenvalues and applications to alternating projections and Douglas-Rachford splitting \cite{bauschke2014optimal} or spectral analysis in the case of the FISTA algorithm \cite{tao2015local}. The novelties in our characterization include i) a proof that the algebraic and geometric multiplicities of eigenvalue $1$ coincide, which allows the definition of a proper projection onto the fixed points space (see Apx.~\ref{apx:linear}); ii) the characterization of the practical linear convergence rate based on the greatest eigenvalue in magnitude, $1$ excluded (Theo.~\ref{th:speed}); and iii) the derivation of the position of the eigenvalues under the averaging property (Lemma~\ref{lem:alpha}). For the sake of clarity, all the proof details are reported in Apx.~\ref{apx:linear}.

$\mathsf{T}$ is an \emph{affine operator} denoted by  $\mathsf{T} = R\cdot + d$ if it can be written
$$ \op{T}(x) = Rx + d $$
where $R$ is an $N\times N$ real matrix and $d$ is a size-$N$ real vector.

Let us define the eigenspace of $R$ linked to eigenvalue $1$: $ \mathcal{N} \triangleq \left\{ x \in \mathbb{R}^N : Rx = x \right\} .$
Importantly, as shown in Apx.~\ref{apx:linear}, the averaging property implies that one can define a projection $\Pi_{\mathcal N}$ onto ${\mathcal N}$; and thus the complementary projection $\overline{\Pi_{\mathcal N}}$.

\begin{theorem}
\label{th:speed}
Let $\alpha\in]0,1[$. Let $\mathsf{T} = R \cdot + d$ be an $\alpha$-averaged operator and suppose that $\fix\op{T}\neq\emptyset$. Then, the sequence $(x_k)_{k>0}$ generated by $x^0\in\mathbb{R}^N$ and 
$$ x_{k+1} = \mathsf{T} (x_k) $$
converges linearly to a point in $\fix \mathsf{T}$ at a rate
$$ \nu \triangleq \max \{ |\lambda_i| : \lambda_i\neq 1 \textrm{ is an eigenvalue of } R \}  < 1$$
in the sense that $\exists \bar{x}\in\fix\mathsf{T},   \lim\sup_k \frac{\log \| \overline{\Pi_{\mathcal N}}(x_{k} - \bar{x}) \| }{k}  \leq \log \nu .$
\end{theorem}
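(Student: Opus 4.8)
The plan is to turn the affine iteration into a purely linear recursion on the error, split $\mathbb{R}^N$ along the eigenvalue-$1$ eigenspace $\mathcal{N}$, and then read the rate off the spectral radius of $R$ restricted to the complementary invariant subspace. First I would exploit the hypothesis $\fix\mathsf{T}\neq\emptyset$: any $\bar{x}\in\fix\mathsf{T}$ satisfies $\bar{x}=R\bar{x}+d$, i.e. $d=(\mathsf{I}-R)\bar{x}$. Substituting this into the iteration, the error $e_k\triangleq x_k-\bar{x}$ obeys $e_{k+1}=Rx_k+d-\bar{x}=R(x_k-\bar{x})=Re_k$, hence $e_k=R^k e_0$. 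Everything then reduces to controlling $R^k$.

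Next I would invoke the structural facts established for averaged affine operators. By the result of Apx.~\ref{apx:linear}, the averaging property forces the algebraic and geometric multiplicities of eigenvalue $1$ to coincide, so that $\Pi_{\mathcal N}$ is a genuine (idempotent) projection and $\mathbb{R}^N$ splits as the direct sum of $\mathcal{N}$ and a complementary $R$-invariant subspace $\mathcal{M}$ (the span of the generalized eigenspaces for eigenvalues $\neq 1$), with $\overline{\Pi_{\mathcal N}}=\mathsf{I}-\Pi_{\mathcal N}$ projecting onto $\mathcal{M}$. Because both subspaces are $R$-invariant, these projections commute with $R$, hence with $R^k$. Moreover, by Lemma~\ref{lem:alpha} every eigenvalue of $R$ different from $1$ has modulus strictly less than $1$, so the spectral radius of the restriction $R|_{\mathcal M}$ equals exactly $\nu<1$.

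The rate now follows from commutation plus Gelfand's formula. I would write $\overline{\Pi_{\mathcal N}}(x_k-\bar{x})=\overline{\Pi_{\mathcal N}}R^k e_0=(R|_{\mathcal M})^k(\overline{\Pi_{\mathcal N}}e_0)$, so that $\|\overline{\Pi_{\mathcal N}}(x_k-\bar{x})\|\leq \|(R|_{\mathcal M})^k\|\,\|\overline{\Pi_{\mathcal N}}e_0\|$. Since $\lim_k \tfrac{1}{k}\log\|(R|_{\mathcal M})^k\|=\log\rho(R|_{\mathcal M})=\log\nu$, taking logarithms, dividing by $k$, and passing to the $\limsup$ yields the claimed bound; any polynomial prefactors coming from Jordan blocks at eigenvalues of modulus $\nu$ are harmless, as $\tfrac{1}{k}\log(k^m\nu^k)\to\log\nu$. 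To pin down the limit point, I would observe that the $\mathcal{N}$-component of the error is frozen, $\Pi_{\mathcal N}e_k=R^k\Pi_{\mathcal N}e_0=\Pi_{\mathcal N}e_0$, while the $\mathcal{M}$-component decays geometrically; hence $x_k\to\bar{x}^{\star}\triangleq\bar{x}+\Pi_{\mathcal N}e_0$, which lies in the affine set $\fix\mathsf{T}=\bar{x}+\mathcal{N}$ and is therefore the fixed point asserted in the statement.

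The genuine content here is structural rather than computational: one must know that eigenvalue $1$ carries no nontrivial Jordan block, so that the splitting $\mathcal{N}\oplus\mathcal{M}$ is well defined and $R$-invariant — precisely what the averaging property buys, and which I import from Apx.~\ref{apx:linear} and Lemma~\ref{lem:alpha}. Given that, the one remaining point demanding care is that $x_k-\bar{x}$ need not vanish for an \emph{arbitrary} fixed point; the purpose of $\overline{\Pi_{\mathcal N}}$ is exactly to discard the frozen $\mathcal{N}$-component, and replacing $\bar{x}$ by the true limit $\bar{x}^{\star}$ is legitimate because $\bar{x}^{\star}-\bar{x}\in\mathcal{N}$ lies in the kernel of $\overline{\Pi_{\mathcal N}}$, so the rate estimate transfers unchanged. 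The appeal to Gelfand's formula is what guarantees the asymptotic log-rate is dictated by the spectral radius alone, independently of the conditioning of $R$ or the sizes of its Jordan blocks.
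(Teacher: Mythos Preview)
Your proposal is correct and follows essentially the same route as the paper: reduce to the linear recursion $e_k=R^k e_0$, use the structural fact from Apx.~\ref{apx:linear} that eigenvalue $1$ is semisimple to obtain the $R$-invariant splitting $\mathcal N\oplus\mathcal M$ and the projections, and then read off the rate from the spectral radius of $R|_{\mathcal M}$. The only cosmetic difference is that the paper bounds $\|(R|_{\mathcal M})^k\|$ via the explicit Jordan-block estimate $Ck^n\nu^k$ from \cite{HorJoh94} before taking logs, whereas you invoke Gelfand's formula directly; both yield the same $\limsup$ conclusion, and your explicit identification of the limit $\bar{x}^\star=\bar{x}+\Pi_{\mathcal N}e_0$ is a nice addition the paper leaves implicit.
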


The proof is reported in  Apx.~\ref{apx:linear}. In the sequel, we will call any eigenvalue $\lambda$ such that $|\lambda|=\nu$ a \emph{dominant} eigenvalue and we will approximate it online as $v_{k} = \|x_{k+1}-x_k\|/\|x_{k} - x_{k-1}\|$.

\begin{remark}
This definition of the convergence rate differs from \cite{bauschke2014optimal} (notably Example 2.11) as taking the $\log$ enables to retrieve directly the principal eigenvalue and not some $\nu + \varepsilon$ or $k^n \nu^k$; this choice was made in order to match practical rates and justifies our next analysis.
\end{remark}

\begin{lemma}
\label{lem:alpha}
Let $\alpha\in]0,1[$ and $\mathsf{T} \triangleq R\cdot + d$ be an {$\alpha$-averaged} affine operator. Then, every eigenvalue $\lambda_i$ of $R$ satisfies
$ |\lambda_i - (1-\alpha) | \leq \alpha .$ Furthermore, $|\lambda_i| \leq 1 $ with equality iff $\lambda_i = 1$, so $\nu < 1$.
\end{lemma}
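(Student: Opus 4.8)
The plan is to first translate the averaging inequality into an algebraic condition on the matrix $R$, and then read off the eigenvalue constraint from it. Because $\op{T}$ is affine, for any $x,y$ one has $\op{T}(x)-\op{T}(y) = R(x-y)$ and $(\op{I}-\op{T})(x)-(\op{I}-\op{T})(y)=(I-R)(x-y)$; letting $z = x-y$ range over all of $\mathbb{R}^N$, the $\alpha$-averaging property becomes
$$ \|Rz\|^2 + \frac{1-\alpha}{\alpha}\,\|(I-R)z\|^2 \leq \|z\|^2 \qquad \forall z\in\mathbb{R}^N, $$
which is equivalent to the negative semidefiniteness of the real symmetric matrix $M \triangleq R^{\tran}R + \frac{1-\alpha}{\alpha}(I-R)^{\tran}(I-R) - I$, i.e. $M \preceq 0$.

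The key step is to pass to the (possibly complex) eigenvectors of $R$. Since $M$ is real and symmetric, $M \preceq 0$ on $\mathbb{R}^N$ implies $w^* M w \leq 0$ for every $w \in \mathbb{C}^N$: writing $w = p + iq$, the imaginary part $p^{\tran}Mq - q^{\tran}Mp$ vanishes by symmetry, leaving $w^* M w = p^{\tran}Mp + q^{\tran}Mq \leq 0$. Applying this to a unit eigenvector $v$ for eigenvalue $\lambda$, and using $Rv = \lambda v$, $(I-R)v = (1-\lambda)v$, the Hermitian form collapses to the scalar inequality
$$ |\lambda|^2 + \frac{1-\alpha}{\alpha}\,|1-\lambda|^2 \leq 1. $$

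From here the argument is elementary. Multiplying by $\alpha$ and expanding with $\lambda = a + ib$, the coefficients of $a^2$ and $b^2$ combine to $1$, and completing the square yields $(a-(1-\alpha))^2 + b^2 \leq \alpha^2$, that is $|\lambda-(1-\alpha)| \leq \alpha$: every eigenvalue lies in the closed disk of center $1-\alpha\in\,]0,1[$ and radius $\alpha$. For the second claim, the triangle inequality gives $|\lambda| \leq |\lambda-(1-\alpha)| + (1-\alpha) \leq \alpha + (1-\alpha) = 1$; equality forces equality in the triangle inequality, so $\lambda-(1-\alpha)$ must be a nonnegative real multiple of the positive real $1-\alpha$ and of modulus $\alpha$, whence $\lambda = 1$. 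Thus the disk is internally tangent to the unit circle at $1$, every eigenvalue $\neq 1$ has modulus strictly below $1$, and since there are finitely many, $\nu < 1$.

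I expect the only genuine obstacle to be the treatment of complex eigenvalues: the averaging hypothesis is a statement about real vectors, so one cannot directly substitute a complex eigenvector into it. The semidefinite reformulation together with the real-symmetry argument is the cleanest way around this, since it lets the complex eigenvector enter through the Hermitian form $w^* M w$ while keeping the real hypothesis intact; the alternative of working on the real two-dimensional invariant subspace spanned by $\mathrm{Re}(v)$ and $\mathrm{Im}(v)$ would be more cumbersome.
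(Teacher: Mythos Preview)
Your proof is correct. The reduction to the semidefinite condition $R^{\tran}R + \tfrac{1-\alpha}{\alpha}(I-R)^{\tran}(I-R) \preceq I$, the passage to complex vectors via symmetry of $M$, and the completion of the square are all clean and valid; the equality analysis in the triangle inequality is also fine.

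Your route differs from the paper's. The paper defers the proof to its appendix on affine averaged operators, but there the modulus claim $|\lambda_i|\leq 1$ with equality only at $\lambda_i=1$ is obtained \emph{indirectly}: it invokes the Krasnoselski\u{\i}--Mann convergence result to get $\sum_k \|x_k - \op{T}(x_k)\|^2 < \infty$, then reads off from the Jordan form of $R$ that no eigenvalue other than $1$ can sit on the unit circle (and that $1$ has no nontrivial Jordan block). The disk inclusion $|\lambda_i-(1-\alpha)|\leq\alpha$ itself is not spelled out there. By contrast, you work directly from the defining inequality of $\alpha$-averagedness and obtain the disk first, with the modulus statement as an immediate geometric corollary. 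Your argument is more self-contained (no appeal to convergence theory) and actually establishes the sharper disk statement that the paper asserts; the paper's route, on the other hand, simultaneously delivers the semisimplicity of the eigenvalue $1$, which it needs for the projection $\Pi_{\mathcal N}$ and which your argument does not address (nor is it asked to).
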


This lemma shows, if $\mathsf{T}$ is $\alpha$-averaged, the eigenvalues of $R$ are contained in a disk of center $1-\alpha$ and radius $\alpha$ as illustrated by Fig.~\ref{fig:alpha}-a,b.

\begin{figure}
\centering
\begin{subfigure}[b]{0.3\columnwidth}
\centering
\setlength\figureheight{0.8\columnwidth} 
\setlength\figurewidth{0.8\columnwidth}
\begin{tikzpicture}[x=\figurewidth,y=\figurewidth][scale=0.5]

 \draw[color=white] (0,0) -- (0,1) ;
 \draw[color=white] (0,1) -- (1,1) ;
 \draw[color=white] (1,1) -- (1,0) ;
 \draw[color=white]  (0,0) -- (1,0) ;

\coordinate (C) at  (0.5,0.5);
\coordinate (U) at  (1,0.5);
\coordinate (I) at  (0.5,1);

\coordinate (A) at  (0.75,0.5);
\coordinate (X) at  (1,0.5);
\coordinate (T) at  (0.73,0.6);

\draw [black,fill=blockbg] (C) circle(0.5);
\draw [black,fill=ablockbg] (A) circle(0.25);

\draw plot[mark size=1.6pt,mark=*,mark options={solid,,,fill=black}] (C);

\draw (0.76,0.33) node[color=black,font=\scriptsize] {FNE};

\draw [line width = 1pt,black,>=stealth,->] (C) -- (U);

\draw [line width = 1pt,black,>=stealth,->] (C) -- (I);

\draw (0.46,0.46) node[color=black,font=\scriptsize] {$0$};
\draw (0.46,1.04) node[color=black,font=\scriptsize] {$i$};
\draw (1.04,0.46) node[color=black,font=\scriptsize] {$1$};

\end{tikzpicture}
\caption{1/2-averaged}
\end{subfigure}
\begin{subfigure}[b]{0.3\columnwidth}
\centering
\setlength\figureheight{0.8\columnwidth} 
\setlength\figurewidth{0.8\columnwidth}
\begin{tikzpicture}[x=\figurewidth,y=\figurewidth]

 \draw[color=white] (0,0) -- (0,1) ;
 \draw[color=white] (0,1) -- (1,1) ;
 \draw[color=white] (1,1) -- (1,0) ;
 \draw[color=white]  (0,0) -- (1,0) ;

\coordinate (C) at  (0.5,0.5);
\coordinate (U) at  (1,0.5);
\coordinate (I) at  (0.5,1);

\coordinate (A) at  (0.666,0.5);
\coordinate (X) at  (1,0.5);

\draw [black,fill=blockbg] (C) circle(0.5);
\draw [black,fill=ablockbg] (A) circle(0.333);

\draw plot[mark size=1.6pt,mark=*,mark options={solid,,,fill=black}] (C);

\draw (0.68,0.33) node[color=black,font=\scriptsize] {$\frac{2}{3}$-averaged};

\draw [line width = 1pt,black,>=stealth,->] (C) -- (U);

\draw [line width = 1pt,black,>=stealth,->] (C) -- (I);

\draw (0.46,0.46) node[color=black,font=\scriptsize] {$0$};
\draw (0.46,1.04) node[color=black,font=\scriptsize] {$i$};
\draw (1.04,0.46) node[color=black,font=\scriptsize] {$1$};

\end{tikzpicture}
\caption{2/3-averaged}
\end{subfigure}
\begin{subfigure}[b]{0.3\columnwidth}
\centering
\setlength\figureheight{0.8\columnwidth} 
\setlength\figurewidth{0.8\columnwidth}

\begin{tikzpicture}[x=\figurewidth,y=\figurewidth]

 \draw[color=white] (0,0) -- (0,1) ;
 \draw[color=white] (0,1) -- (1,1) ;
 \draw[color=white] (1,1) -- (1,0) ;
 \draw[color=white]  (0,0) -- (1,0) ;

\coordinate (C) at  (0.5,0.5);
\coordinate (U) at  (1,0.5);
\coordinate (I) at  (0.5,1);

\coordinate (A) at  (0.75,0.5);
\coordinate (R) at  (0.625,0.5);
\coordinate (X) at  (1,0.5);
\coordinate (T) at  (0.73,0.6);

\draw [black,fill=blockbg] (C) circle(0.5);
\draw [black,fill=tblockbg,opacity=0.6] (R) circle(0.375);
\draw [black,fill=ablockbg,opacity=0.4] (A) circle(0.25);

\draw plot[mark size=1.6pt,mark=*,mark options={solid,,,fill=black}] (C);

\draw (0.76,0.33) node[color=black,font=\scriptsize] {FNE};

\draw (0.38,0.25) node[color=black,font=\scriptsize] {$1.5$ relaxation};

\draw [line width = 1pt,black,>=stealth,->] (C) -- (U);

\draw [line width = 1pt,black,>=stealth,->] (C) -- (I);

\draw (0.46,0.46) node[color=black,font=\scriptsize] {$0$};
\draw (0.46,1.04) node[color=black,font=\scriptsize] {$i$};
\draw (1.04,0.46) node[color=black,font=\scriptsize] {$1$};

\end{tikzpicture}
\caption{$1.5$ relaxation}
\end{subfigure}
\caption{Eigenvalues disks of some $\alpha$-averaged linear operators}
\label{fig:alpha}
\end{figure}


\begin{example}[Gradient algorithm on a Quadratic Function]
\label{ex:1}
For a differentiable convex function $f$ with an $L$-Lipschitz gradient $\nabla f$, the standard gradient algorithm writes
\begin{align}
\label{eq:grad}
x_{k+1} = x_k - \frac{1}{L} \nabla f(x_k) 
\end{align}
and the related operator is $\op{T} = \op{I} - \frac{1}{L}\nabla f$.

For this illustration, we take quadratic $f(x) = \frac{1}{2} \| Ax-b \|^2$; thus, $f$ is $L$-smooth with $L = \lambda_{\max}(A^\tran A)$ \emph{and} $\mu$-strongly convex with $\mu = \lambda_{\min}(A^\tran A)$. The iterations are affine and the spectrum of the linear part of $\op{T}$ is comprised in the interval $[0 , 1-\mu/L ]$ so we obtain the well-known rate $\nu = 1-\mu/L$.
\end{example}


\section{Relaxation and Inertia} 
\label{sec:cvreliner} 

In this section, we describe \emph{Relaxation}, \emph{Inertia}, and \emph{Alternated Inertia} as modifications on the classical fixed-point iterations presented above. Notably, we give convergence results for the iterates, and exhibit the differences in monotonicity between inertia and relaxation. In addition, we derive optimal parameters and rates in the case of affine operators with real eigenvalues.

\subsection{Relaxation}
\label{sec:cvrel} 

\subsubsection{Convergence}

For a positive sequence $(\eta_k)$, the \emph{relaxed} iterations follow
\begin{align}
\label{eq:rela1}
x_{k+1} = \eta_k \op{T}(x_k) + (1-\eta_k) x_k  =  \op{T}(x_k) + (\eta_k-1) (\op{T}(x_k) -   x_k) .
\end{align}

As mentioned in the introduction, this modification is present since Richardson's iterations and Krasnosel'ski\u{\i}--Mann algorithm, and over-relaxation ($\eta>1$) is still investigated to improve convergence speed. The following result directly comes from  Krasnoselski\u{\i}--Mann theorem. 

\begin{lemma}
\label{lem:cva}
Let $\alpha\in]0,1[$ and let the sequence $(\eta_k)$ verify $0< \underline{\eta} \leq \eta_k \leq \overline{\eta} < 1/\alpha$ for all $k>0$. Let $\mathsf{T}$ be an $\alpha$-averaged operator such that $\fix \mathsf{T} \neq \emptyset$. Then, the sequence $(x_k)_{k>0}$ generated by $x^0 \in\mathbb{R}^N$ and the iterations
$$ x_{k+1} = \eta_k \op{T}(x_k) + (1-\eta_k) x_k $$
converges to a point in $\fix \mathsf{T}$.
\end{lemma}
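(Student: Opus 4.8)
The plan is to recognize the relaxed recursion~\eqref{eq:rela1} as an ordinary Krasnoselski\u{\i}--Mann iteration on a suitable nonexpansive operator, with varying relaxation parameters, and then invoke the general form of the result behind Lemma~\ref{lem:cv}. The starting point is the standard equivalent characterization of the averaging property: $\op{T}$ is $\alpha$-averaged if and only if there is a nonexpansive operator $\op{N}$ (i.e.\ $\|\op{N}(x)-\op{N}(y)\|\le\|x-y\|$ for all $x,y$) with $\op{T} = (1-\alpha)\op{I} + \alpha\op{N}$, equivalently $\op{T}(x) - x = \alpha\bigl(\op{N}(x)-x\bigr)$ for all $x$.

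Substituting this identity into~\eqref{eq:rela1}, I would rewrite the iteration as
$$ x_{k+1} = x_k + \eta_k\bigl(\op{T}(x_k)-x_k\bigr) = (1-\eta_k\alpha)\,x_k + \eta_k\alpha\,\op{N}(x_k), $$
so that, setting $\lambda_k \triangleq \eta_k\alpha$, the sequence $(x_k)$ is \emph{exactly} the KM iteration $x_{k+1} = (1-\lambda_k)x_k + \lambda_k\op{N}(x_k)$ for the nonexpansive map $\op{N}$. Two bookkeeping facts then close the argument. First, $\op{N}$ and $\op{T}$ share their fixed points: $\op{T}(x)=x \Leftrightarrow \op{N}(x)=x$ follows from $\op{T}(x)-x = \alpha(\op{N}(x)-x)$ and $\alpha\neq0$, so $\fix\op{N} = \fix\op{T}\neq\emptyset$. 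Second, the hypotheses on $(\eta_k)$ translate into the required control on $\lambda_k$: since $0<\underline{\eta}\le\eta_k\le\overline{\eta}<1/\alpha$, one has $\underline{\eta}\alpha \le \lambda_k \le \overline{\eta}\alpha$ with $\underline{\eta}\alpha>0$ and $\overline{\eta}\alpha<1$; hence $(\lambda_k)$ lies in a compact subinterval of $(0,1)$ and $\lambda_k(1-\lambda_k)\ge \underline{\eta}\alpha\,(1-\overline{\eta}\alpha)>0$, which yields $\sum_k\lambda_k(1-\lambda_k)=+\infty$.

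With these two facts verified, the general Krasnoselski\u{\i}--Mann convergence theorem for nonexpansive maps with varying relaxation parameters (the relaxed version underlying Lemma~\ref{lem:cv}, see \cite{livre-combettes}) applies and gives convergence of $(x_k)$ to a point of $\fix\op{N}=\fix\op{T}$, which is the claim.

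I expect the only genuine subtlety to be the role of the two bounds on $(\eta_k)$, and this is where I would spend care: the upper bound $\overline{\eta}<1/\alpha$ is exactly what keeps the induced relaxation $\lambda_k=\eta_k\alpha$ strictly below $1$ (i.e.\ it prevents over-relaxing past the nonexpansive regime, which would break the averaging and Fej\'er structure), while the strictly positive lower bound $\underline{\eta}$ prevents $\lambda_k$ from degenerating to $0$. Together they are precisely what makes the summability condition $\sum_k\lambda_k(1-\lambda_k)=+\infty$ hold uniformly; everything else is a direct translation between the relaxed-$\op{T}$ and the KM-on-$\op{N}$ viewpoints.
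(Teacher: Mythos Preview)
Your proof is correct and follows essentially the same approach as the paper: the paper notes that $\op{T}_\eta$ is $\eta\alpha$-averaged (citing \cite[Prop.~4.28]{livre-combettes}) and then appeals to the Krasnoselski\u{\i}--Mann theorem, while you make the underlying decomposition $\op{T}=(1-\alpha)\op{I}+\alpha\op{N}$ explicit and reduce directly to the KM iteration on the single nonexpansive $\op{N}$ with varying relaxation $\lambda_k=\eta_k\alpha$. If anything, your version is slightly cleaner in handling the varying $\eta_k$, since you reduce to one fixed nonexpansive map rather than a sequence of averaged operators, but the substance is identical.
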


The proof is based on the fact that if $\alpha\in]0,1[$ and $\eta\in]0,1/\alpha[$, then $\op{T}_\eta$ is $\eta\alpha$-averaged \cite[Prop.~4.28]{livre-combettes}. The convergence thus directly follows from Lemma~\ref{lem:cv} and the produced iterates are \emph{monotonous} in the light of Remark~\ref{rem:fejer}.

\subsubsection{Optimal parameters for real eigenvalues}
\label{sec:affrel} 

Let $\op{T} = R\cdot + d$ be an $\alpha$-averaged linear operator. Suppose that $R$ has real eigenvalues $\lambda_i\in[1-2\alpha,\lambda] \cup \{1\}$. The eigenvalues of $R_\eta = \eta R + (1-\eta) I$ have the form $\mu_i = \eta \lambda_i + (1-\eta)$ . The effect of over-relaxation (for $\eta>1$) is thus the combination of an inflation and a translation as seen in Figure~\ref{fig:alpha}-c.\\
i) When $\eta>0$ is small enough, the dominant eigenvalue of $R_\eta$ is $ \eta \lambda + (1-\eta) >0$; so that the convergence rate $\nu$ will decrease when $\eta$ increases.\\
ii) When $\eta<1/\alpha$ is big enough, the dominant eigenvalue of $R_\eta$ will be $  \eta(1-2\alpha) + (1-\eta) = 1 - 2\alpha\eta < 0$; so that the convergence rate $\nu$ will increase when $\eta$ increases.\\
Finally, The optimal parameter $\eta^\star$, which minimizes the rate, corresponds to the case where the dominant eigenvalues in the two cases are the opposite one of each other:
\begin{align*}
 \eta^\star = \frac{2}{2\alpha + 1 -\lambda} \textrm{and optimal rate } \nu^\star = \frac{2\alpha -1  + \lambda}{2\alpha + 1 - \lambda} .
\end{align*}

In the field of iterative methods for solving linear systems, relaxation has received a lot of attention and the optimal relaxation boils down to Richardson/Chebyshev iterations (see \cite[Example~4.1]{saad2003iterative} and Fig.~\ref{fig:rellin} for an illustration).

\noindent\emph{{\scshape Application in the setup of Ex.~\ref{ex:1}:}
The relaxed iterations write 
$$ x_{k+1} = x_k - \frac{\eta_{k+1}}{L} \nabla f(x_k) $$
and thus relaxation simply consists in adjusting the step size for the gradient algorithm and we have the following optimal relaxation parameter 
$$ \eta^\star = \frac{2}{1+ \mu/L}  \textrm{ and rate }  \nu^\star =  \frac{ 1 - \mu/L}{1+ \mu/L}$$
leading to an optimal stepsize of $2/(\mu + L)$ which matches the asymptotic optimal stepsize (see e.g. \cite[Sec.~4.1.2]{taylor2015smooth}). }

\subsection{Inertia}
\label{sec:cvine} 

\subsubsection{Convergence}

Stemming from popular inertial methods \cite{polyak1964some,nesterov1983method,nesterov2005smooth}, acceleration techniques based on the use of the memory of the previous outputs are very popular both from a theoretical and a practical point of view (see \cite{tseng2008accelerated} and references therein for an overview of these methods). Formally, with $\op{T}$ an operator, the core of these methods consist in performing the following iterations.
\begin{align}
\label{eq:acc} \left\{ \begin{array}{l} x_{k+1} =  \mathsf{T}(y_k) \\ y_{k+1} = x_{k+1} + \gamma_k (x_{k+1} - x_k ) \end{array}  \right. \Leftrightarrow ~~ x_{k+1} =  \mathsf{T}\left( x_{k} + \gamma_k (x_{k} - x_{k-1} )\right) 
\end{align}

A careful choice of the sequence $(\gamma_k)_{k>0}$  is known to accelerate the theoretical functional convergence rate from $\mathcal{O}(1/k)$ to  $\mathcal{O}(1/k^2)$ for a large class of algorithms (see \cite{tseng2008accelerated,chambolle2014convergence,johnstone2015lyapunov} for details) and is very popular in practice.

However, contrary to the relaxation, this modification of the algorithm deeply changes the algorithm behavior as the error between the iterates and some fixed point is \emph{not monotonously decreasing} anymore which can cause stability or domain problems for the iterates. The next lemma provides a general set of conditions for iterates convergence encompassing several results of the literature  \cite{alvarez2001inertial,alvarez2004weak,mainge2008convergence,lorenz2014inertial,2015arXiv150602186L}.

\begin{lemma}
\label{lem:cvacc}
Let $\alpha\in]0,1[$. Let $\mathsf{T}$ be an $\alpha$-averaged operator such that $\fix \mathsf{T} \neq \emptyset$. Assume one the following:
\begin{itemize}
\item[i)] $\exists \gamma, 0\leq \gamma_k \leq \bar{\gamma} < 1$ and $\sum_{k=1}^\infty \gamma_k \| x_k - x_{k-1} \|^2 < \infty $.
\item[ii)] $\exists \bar{\gamma} < 1$, $(\gamma_k)_{k>0}$ is non-decreasing sequence in $[0, \bar{\gamma})$  such that $\forall k>1$
$$  1-\gamma_{k-1} - (1-\gamma_k) \gamma_k -   \frac{\alpha}{1-\alpha} \gamma_k(1+\gamma_k) \geq \underline{m} > 0.$$
\item[iii)] as a particular case of ii), when $\gamma_k = \gamma$ for all $\forall k>1$,
$ (1-\gamma)^2 > \frac{\alpha}{1-\alpha} \gamma (1+\gamma).$
\end{itemize}
 Then, the sequence $(x_k)_{k>0}$ generated by $x_{-1} = x_0 \in\mathbb{R}^N$ and the iterations
$$ x_{k+1} = \op{T}(x_k + \gamma_k(x_k - x_{k-1}))  $$
converges to a point in $\fix \mathsf{T}$.
\end{lemma}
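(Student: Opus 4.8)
The plan is to base the whole argument on a single energy inequality coming from the $\alpha$-averaging property, and then to treat the three cases as refinements of one Fej\'er-type argument. Writing $\bar x\in\fix\op{T}$, $h_k=\|x_k-\bar x\|^2$, $\delta_k=x_k-x_{k-1}$ and $\beta:=\frac{1-\alpha}{\alpha}$, I would apply the averaging inequality to the pair $(y_k,\bar x)$; since $x_{k+1}=\op{T}(y_k)$ and $\op{T}\bar x=\bar x$ this gives $h_{k+1}+\beta\|y_k-x_{k+1}\|^2\le\|y_k-\bar x\|^2$. Expanding $y_k-\bar x=(x_k-\bar x)+\gamma_k\delta_k$ with the polarization identity $2\langle x_k-\bar x,\delta_k\rangle=h_k+\|\delta_k\|^2-h_{k-1}$ yields the master inequality
\[ h_{k+1}+\beta\|y_k-x_{k+1}\|^2\le(1+\gamma_k)h_k-\gamma_k h_{k-1}+\gamma_k(1+\gamma_k)\|\delta_k\|^2, \]
from which everything below is extracted.

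For case (i) I would drop the nonnegative residual term and read the master inequality as $[h_{k+1}-h_k]_+\le\bar\gamma[h_k-h_{k-1}]_++\varepsilon_k$ with $\varepsilon_k:=\gamma_k(1+\gamma_k)\|\delta_k\|^2$ summable (by hypothesis and $\gamma_k\le\bar\gamma<1$). The classical positive-part lemma then gives $\sum_k[h_k-h_{k-1}]_+<\infty$, so $(h_k)$ converges for every fixed point (quasi-Fej\'er monotonicity) and $(x_k)$ is bounded. Re-summing the master inequality while keeping the residual shows $\sum_k\|y_k-x_{k+1}\|^2<\infty$, hence $\|(\op{I}-\op{T})y_k\|=\|y_k-x_{k+1}\|\to0$; the recursion $\|\delta_{k+1}\|\le\bar\gamma\|\delta_k\|+\|y_k-x_{k+1}\|$ then forces $\|\delta_k\|\to0$ and thus $\|y_k-x_k\|=\gamma_k\|\delta_k\|\to0$. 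I would close with Opial's lemma: any cluster point of $(x_k)$ is a cluster point of $(y_k)$ and, by continuity of $\op{I}-\op{T}$, a fixed point; combined with convergence of $\|x_k-\bar x\|$ for every $\bar x\in\fix\op{T}$ this pins down a single limit in $\fix\op{T}$.

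Cases (ii) and (iii) I would reduce to (i) by proving $\sum_k\|\delta_k\|^2<\infty$. The engine is the Lyapunov sequence $\mathcal{G}_k:=h_k-\gamma_k h_{k-1}+\beta(1-\gamma_{k-1})\|\delta_k\|^2$. Using that $(\gamma_k)$ is non-decreasing (so $-(\gamma_{k+1}-\gamma_k)h_k\le0$) together with the sharp Young bound $\|y_k-x_{k+1}\|^2=\|\delta_{k+1}-\gamma_k\delta_k\|^2\ge(1-\gamma_k)\|\delta_{k+1}\|^2-\gamma_k(1-\gamma_k)\|\delta_k\|^2$ (Young with parameter $s=\gamma_k$), the master inequality collapses, after cancelling the $\|\delta_{k+1}\|^2$ terms, to
\[ \mathcal{G}_{k+1}\le\mathcal{G}_k-\beta\big(1-\gamma_{k-1}-(1-\gamma_k)\gamma_k-\tfrac{\alpha}{1-\alpha}\gamma_k(1+\gamma_k)\big)\|\delta_k\|^2. \]
Hypothesis (ii) is exactly the statement that the bracket is $\ge\underline{m}>0$, and (iii) is its constant-$\gamma$ specialization, where $1-\gamma-(1-\gamma)\gamma=(1-\gamma)^2$. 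A short argument from $\mathcal{G}_{k+1}\le\mathcal{G}_k$ gives $h_k\le\bar\gamma h_{k-1}+\mathcal{G}_1$, hence $(h_k)$ is bounded, whence $\mathcal{G}_k\ge-\bar\gamma\sup_j h_j$ is bounded below; telescoping then gives $\beta\underline{m}\sum_k\|\delta_k\|^2\le\mathcal{G}_1-\inf_k\mathcal{G}_k<\infty$. Since $\gamma_k\le\bar\gamma$ this forces $\sum_k\gamma_k\|\delta_k\|^2<\infty$, so the hypotheses of case (i) hold and its conclusion applies verbatim.

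The main obstacle is the telescoping in cases (ii)/(iii): one must choose the Lyapunov function $\mathcal{G}_k$ and the Young parameter so that the coefficient of $\|\delta_k\|^2$ reproduces \emph{precisely} the combination in hypothesis (ii) (the pairing of the positive $\gamma_k(1+\gamma_k)\|\delta_k\|^2$ from step $k$ with the negative $\|\delta_k\|^2$ hidden inside the residual $-\beta\|y_{k-1}-x_k\|^2$ of step $k{-}1$ is what produces the term $1-\gamma_{k-1}$), and one must separately verify that $\mathcal{G}_k$ is bounded below, which is where $\bar\gamma<1$ is used to bound $(h_k)$. The remaining ingredients — the discrete positive-part lemma of case (i) and the fact that cluster points are fixed points — are classical and routine in finite dimension.
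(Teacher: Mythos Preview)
The paper does not actually prove this lemma: it is stated with a reference to the literature (Alvarez--Attouch, Alvarez, Maing\'e, Lorenz--Pock) and no argument is given in either the main text or the appendices. So there is no ``paper's own proof'' to compare against; your proposal has to be judged on its own.

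Your argument is correct and is essentially the standard route taken in those cited works. The master inequality you derive from $\alpha$-averagedness is exactly the starting point of Alvarez--Attouch and Lorenz--Pock, and your treatment of case~(i) via the positive-part recursion $[h_{k+1}-h_k]_+\le\bar\gamma[h_k-h_{k-1}]_++\varepsilon_k$ followed by Opial's lemma is the classical Alvarez--Attouch scheme. For cases~(ii)/(iii), your Lyapunov function $\mathcal{G}_k=h_k-\gamma_k h_{k-1}+\beta(1-\gamma_{k-1})\|\delta_k\|^2$ together with the Young bound on $\|\delta_{k+1}-\gamma_k\delta_k\|^2$ is precisely what is needed to make the coefficient of $\|\delta_k\|^2$ equal to $-\beta$ times the combination in hypothesis~(ii); this is the Maing\'e/Lorenz--Pock mechanism. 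The bootstrap from $\mathcal{G}_{k+1}\le\mathcal{G}_k$ to boundedness of $(h_k)$ (using $\bar\gamma<1$), then to a lower bound on $\mathcal{G}_k$, then to $\sum_k\|\delta_k\|^2<\infty$, and finally back to case~(i), is clean and correct.

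Two minor points you glossed over but which cause no trouble: first, the Lyapunov decrease at $k=1$ requires a convention for $\gamma_0$ (taking $\gamma_0=\gamma_1$, or simply starting the telescoping at $k=2$, is harmless since $\delta_0=0$ anyway). Second, when you write $h_k\le\bar\gamma h_{k-1}+\mathcal{G}_1$ to get boundedness of $(h_k)$, you implicitly use $\mathcal{G}_1\ge0$ or replace it by $|\mathcal{G}_1|$; either way the conclusion stands since $\bar\gamma<1$.
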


\subsubsection{Optimal parameters for real eigenvalues}
\label{sec:affiner}

Let us define $\op{T}^\gamma$, the operator generating $(x_{k+1},x_k)$ from $(x_{k},x_{k-1})$ where $x_{k+1} = \op{T}(x_k + \gamma(x_k - x_{k-1}))$. When $\op{T} = {R} \cdot + d$, we have
\begin{align*}
\op{T}^\gamma \left( \left[\begin{array}{c} z_1 \\ z_2 \end{array}\right] \right) = \left[\begin{array}{c} (1+\gamma){R}z_1 - \gamma{R}z_2 + d \\ z_1 \end{array}\right] = \underbrace{ \left[\begin{array}{cc} (1+\gamma){R} & - \gamma{R} \\ {I} & {0}  \end{array}\right]}_{{R}^\gamma} \left[\begin{array}{c} z_1 \\ z_2 \end{array}\right] + \underbrace{\left[\begin{array}{c} d \\ 0 \end{array}\right]}_{\tilde{d}}.
\end{align*}
As for relaxation, the eigenvalues of  $R^\gamma$ can be derived from those of from $R$. However, one eigenvalue $\lambda_i$ of $R$ leads to two eigenvalues for $R^\gamma$; they are the roots of 
$$ p_i(\mu) =  \mu^2 - (1+\gamma)\lambda_i \mu + \gamma\lambda_i. $$ 

The main results are:\\
i) for \emph{negatives eigenvalues} $\lambda_i<0$, the magnitude of $\mu_i$ is
$$ \frac{(1+\gamma) |\lambda_i| + \sqrt{(1+\gamma)^2\lambda_i^2 + 4\gamma|\lambda_i|}}{2} \geq (1+\gamma)|\lambda_i| $$
thus inertia has a \emph{negative effect} on the negative side of the spectrum. For the sake of clarity, we will focus on the non-negative eigenvalue case in the following, corresponding to $\alpha\in(0,1/2]$ for the averaging property.\\
ii) for \emph{non-negative eigenvalues} $\lambda_i\in[0,\lambda]\cup\{1\}$, optimal parameter and rate are
\begin{align*}
 \gamma^\star = \frac{ (1 - \sqrt{1-\lambda} )^2}{\lambda} \text{ and } \nu^{\star} = 1 -\sqrt{1-\lambda}.
\end{align*}
Notably, we have $\nu^{\star} \geq \lambda/2$ which means the rate with inertia cannot be better than half the original rate.

\noindent\emph{{\scshape Application in the setup of Ex.~\ref{ex:1}:}
The inertial iteration of $\op{T}$ writes 
$$ \left\{ 
\begin{array}{rl} 
y_k &= x_k + \gamma_k(x_k - x_{k-1}) \\
x_{k+1} &= y_k - \frac{1}{L} \nabla f(y_k) 
\end{array}
\right.  $$
we have the following optimal inertia parameter 
\begin{align*}
\gamma^\star  = \frac{ 1 - \sqrt{\mu/L} }{1+\sqrt{\mu/L}} \text{ and rate } \nu^\star = 1 -\sqrt{\mu/L}.
\end{align*}
Once again, the obtained parameter and rate matches practical and theoretical optimal situations as summarized in \cite{o2013adaptive}. }

\subsection{Alternated Inertia}

\subsubsection{Convergence}

In order to improve the convergence properties of the iterates of Eq.~\eqref{eq:acc}, it was suggested in \cite{mu2015note} to apply inertia every other iteration. This variant is a lot less popular than vanilla inertia. However, its rather good convergence properties and performances, along with its remarkable closeness with relaxation make it worthy of careful attention. The iterations of alternated inertia are:
\begin{equation}
\label{eq:alacc}
\left\{
\begin{array}{ll}
x_{k+1} = \op{T}(x_k)  & \text{ if } k \text{ is even}  \\
x_{k+1} = \op{T}(x_k + \gamma_k(x_k - x_{k-1}))  & \text{ if } k \text{ is odd}
\end{array}
\right.
\end{equation}

Interestingly, using inertia every other iteration can make the error \emph{monotonously} decreasing again which will reveal to be interesting numerically.

\begin{lemma}
\label{lem:cvalt}
Let $\alpha\in]0,1[$. Let $\mathsf{T}$ be an $\alpha$-averaged operator such that $\fix \mathsf{T} \neq \emptyset$. Assume that the sequence $(\gamma_k)$ verifies $ 0 \leq \gamma_k \leq \frac{1-\alpha}{\alpha}$ for all $k>0$. Then, the sequence $(x_k)_{k>0}$ generated by $ x_0 \in\mathbb{R}^N$ and the iterations
$$ \left\{
\begin{array}{ll}
x_{k+1} = \op{T}(x_k)  & \text{ if } k \text{ is even}  \\
x_{k+1} = \op{T}(x_k + \gamma_k(x_k - x_{k-1}))  & \text{ if } k \text{ is odd}
\end{array}
\right. $$
converges to a point in $ \fix \mathsf{T}$.
\end{lemma}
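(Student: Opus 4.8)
The plan is to establish Fejér monotonicity over a full even–odd cycle, so that the iterate distance to any fixed point $\bar x$ is non-increasing across paired iterations, and then invoke a standard Opial-type argument to extract convergence to a fixed point. The key is to treat two consecutive iterations (one plain step at even $k$, one inertial step at odd $k$) as a single block and to show the square distance $\|x_{k+2}-\bar x\|^2$ is bounded by $\|x_k-\bar x\|^2$.

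First I would fix a fixed point $\bar x\in\fix\mathsf{T}$ and write out the two substeps explicitly. For the even step $x_{k+1}=\op{T}(x_k)$, the $\alpha$-averaging inequality applied to $x_k$ and $\bar x$ (using $\op{T}\bar x=\bar x$) gives
\begin{equation*}
\|x_{k+1}-\bar x\|^2 + \frac{1-\alpha}{\alpha}\|x_{k+1}-x_k\|^2 \leq \|x_k-\bar x\|^2 .
\end{equation*}
For the following odd step, set $y_{k+1}=x_{k+1}+\gamma_{k+1}(x_{k+1}-x_k)$ and $x_{k+2}=\op{T}(y_{k+1})$. Applying averaging to $y_{k+1}$ and $\bar x$ yields
\begin{equation*}
\|x_{k+2}-\bar x\|^2 + \frac{1-\alpha}{\alpha}\|x_{k+2}-y_{k+1}\|^2 \leq \|y_{k+1}-\bar x\|^2 .
\end{equation*}
I would then expand $\|y_{k+1}-\bar x\|^2 = \|x_{k+1}-\bar x\|^2 + 2\gamma_{k+1}\langle x_{k+1}-\bar x; x_{k+1}-x_k\rangle + \gamma_{k+1}^2\|x_{k+1}-x_k\|^2$ and, crucially, rewrite the inner-product cross term using the identity $2\langle x_{k+1}-\bar x; x_{k+1}-x_k\rangle = \|x_{k+1}-\bar x\|^2 - \|x_k-\bar x\|^2 + \|x_{k+1}-x_k\|^2$, which converts everything into distances-to-$\bar x$ and successive-difference norms.

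Next I would chain the two inequalities. Substituting the expansion of $\|y_{k+1}-\bar x\|^2$ into the odd-step bound and then using the even-step bound to control $\|x_{k+1}-\bar x\|^2$ and the beneficial $\frac{1-\alpha}{\alpha}\|x_{k+1}-x_k\|^2$ term, the goal is to arrive at an inequality of the shape
\begin{equation*}
\|x_{k+2}-\bar x\|^2 \leq \|x_k-\bar x\|^2 - c_1\|x_{k+2}-y_{k+1}\|^2 - c_2\|x_{k+1}-x_k\|^2
\end{equation*}
with nonnegative coefficients $c_1,c_2$. The bound $\gamma_k\leq\frac{1-\alpha}{\alpha}$ should be exactly what makes the coefficient multiplying $\|x_{k+1}-x_k\|^2$ nonnegative: the favorable term $\frac{1-\alpha}{\alpha}\|x_{k+1}-x_k\|^2$ harvested from the even step must dominate the detrimental contributions $\gamma_{k+1}^2\|x_{k+1}-x_k\|^2$ and the cross term produced by inertia. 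This is the step I expect to be the main obstacle: carefully tracking all the $\|x_{k+1}-x_k\|^2$ coefficients and verifying that the stated bound $\gamma_k\le(1-\alpha)/\alpha$ is precisely the threshold at which the net coefficient stays nonnegative.

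Finally, once the cyclic Fejér inequality holds, convergence follows by a now-standard route. Summing over even $k$ shows $(\|x_{2j}-\bar x\|)_j$ is non-increasing hence convergent, and that $\sum_j \|x_{2j+1}-x_{2j}\|^2<\infty$ and $\sum_j\|x_{2j+2}-y_{2j+1}\|^2<\infty$, so both successive-difference subsequences vanish; combined with $\gamma_k$ bounded this forces $x_{k+1}-x_k\to 0$ along the whole sequence and therefore $\|\op{T}(x_k)-x_k\|\to 0$. Boundedness of $(x_k)$ gives a convergent subsequence whose limit, by the demiclosedness of $\mathsf{I}-\op{T}$ at $0$ (a consequence of $\alpha$-averagedness), lies in $\fix\mathsf{T}$; the Fejér property upgrades this to convergence of the whole sequence to that limit, exactly as in the proof of Lemma~\ref{lem:cv}.
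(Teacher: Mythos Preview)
Your proposal is correct and follows essentially the same route as the paper: the paper also bounds $\|x_{k+2}-\bar x\|^2$ by applying $\alpha$-averagedness to the odd step, expanding $\|y_{k+1}-\bar x\|^2$ via the convex-combination identity (your polarization rewrite is exactly \cite[Cor.~2.14]{livre-combettes}), and then applying $\alpha$-averagedness to the even step, arriving at $\|x_{k+2}-\bar x\|^2 \le \|x_k-\bar x\|^2 - (1+\gamma_{k+1})\bigl(\tfrac{1-\alpha}{\alpha}-\gamma_{k+1}\bigr)\|x_{k+1}-x_k\|^2 - \tfrac{1-\alpha}{\alpha}\|x_{k+2}-y_{k+1}\|^2$, so your ``main obstacle'' resolves cleanly with the stated bound on $\gamma_k$. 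The only cosmetic difference is that the paper concludes convergence directly from summability and the Fej\'er property on the even subsequence rather than invoking demiclosedness/Opial explicitly, but in $\mathbb{R}^N$ these are equivalent.
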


The proof, which generalizes \cite{mu2015note} to $\alpha$-averaged operators, can be found in Apx.~\ref{apx:cvalt}. 

\subsubsection{Optimal parameters for real eigenvalues}

Let us define $\op{T}^{.,\gamma}$ the operator generating $x_{k+2}$ from $x_k$ for $k$ even.  When $\op{T} = {R} \cdot + d$, one has 
\begin{align*}
x_{k+2} = \op{T} \left( \op{T}(x_k) + \gamma ( \op{T}(x_k) - x_k) \right) = \underbrace{\left[(1+\gamma)R^2 - \gamma R \right]}_{R^{.,\gamma}} x_k + (1+\gamma)Rd + d 
\end{align*}
so that the eigenvalues of $ R^{.,\gamma} $ are $\mu_i = (1+\gamma)\lambda_i^2 - \gamma\lambda_i $ with $\lambda_i$ an eigenvalue of $R$.

The main results are:\\
i) for \emph{negatives eigenvalues} $\lambda_i<0$, the magnitude of $\mu_i$ is
$$ (1+\gamma)|\lambda_i|^2 + \gamma|\lambda_i| $$
thus alternated inertia has a negative effect on negative eigenvalues.\\
ii) for \emph{non-negative eigenvalues} $\lambda_i\in[0,\lambda]\cup\{1\}$, the largest $\mu_i$ in magnitude is linked to the original dominant eigenvalue $\lambda$ \emph{and} intermediate eigenvalues. Taking the worst case scenario over the unknown (and hard to estimate) intermediate eigenvalues, the optimal parameter and rate are
\begin{align*}
 \gamma^\star = \frac{2 (\lambda)^2 + (\sqrt{2}-1)\lambda}{2\lambda(1-\lambda) + \frac{1}{2}} \text{ and } \nu^{\star} = \frac{(\gamma^\star)^2}{4(1+\gamma^\star)}.
\end{align*}

\noindent\emph{{\scshape Application in the setup of Ex.~\ref{ex:1}:}
The alternated inertial iteration of $\op{T}$ writes
$$ \left\{ 
\begin{array}{rl} 
y_k &= x_k + \gamma_k(x_k - x_{k-1}) \\
x_{k+1} &= y_k - \frac{1}{L} \nabla f(y_k) \\
x_{k+2} &= x_{k+1} - \frac{1}{L} \nabla f(x_{k+1})
\end{array}
\right. \Leftrightarrow \left\{ 
\begin{array}{rl} 
x_{k+1} &= x_k - \frac{1}{L} \nabla f(x_k) \\
x_{k+2} &= x_{k+1} - \frac{1+\gamma_{k+2}}{L} \nabla f(x_{k+1})
\end{array}
\right.  $$
This formulation properly illustrates the equivalence between alternated inertia and alternated relaxation. 
We have the following optimal inertia parameter 
\begin{align*}
\gamma^{\star} = \frac{ 2(\mu/L)^2  - (3 + \sqrt{2} )\mu/L + 1 + \sqrt{2}}{ -2(\mu/L)^2  +2\mu/L  + \frac{1}{2} } \text{ and rate } \nu^\star = \frac{(\gamma^{\star})^2}{4(1+\gamma^{\star})}.
\end{align*} }

\subsubsection{Comparison of the optimal rates}


\begin{figure}
\centering
\begin{subfigure}[b]{0.49\columnwidth}
    \centering
	\setlength\figureheight{0.7\columnwidth} 
	\setlength\figurewidth{0.9\columnwidth}
        \begin{tikzpicture}[scale=1]

\begin{axis}[
width=\figurewidth,
height=\figureheight,
xmin=0,
xmax=2,
xlabel={Relaxation parameter $\eta$},
xmajorgrids,
xtick={0,1,1.2,2},
xticklabels={$0$,$1$,$\eta^\star$,$2$},
ymin=0,
ymax=1,
ymajorgrids,
ylabel={Rate $\nu$},
ytick={0,0.2,0.33,1},
yticklabels={$0$,$\nu^\star$,$\lambda_{\max}$,$1$},
legend style={at={(0.99,0.99)},anchor=north east,cells={anchor=west},font=\tiny},
]

\addplot [
color=blue!20,
line width=1.4pt,
mark size=0pt
]
coordinates{(0,1) (1.2,0.2) (2,1)};
\addlegendentry{$\nu$};

\addplot [
color=black,
dotted,
line width=1.0pt,
mark size=0pt
]
coordinates{(0,1) (1,0) (2,1)};
\addlegendentry{$|\eta\lambda_{\min} +1-\eta|$};

\addplot [
color=red,
dashed,
line width=0.8pt,
mark size=0pt
] coordinates {(0,1) (1.5,0) (2,0.33)}; 
\addlegendentry{$|\eta\lambda_{\max} +1-\eta|$};

\end{axis} 
\end{tikzpicture}
     \caption{Convergence rate versus the relaxation parameter along with tradeoff enabling to derive optimal relaxation. (for illustration purposes, we assumed $1\geq\lambda_{\max}\geq\lambda_{\min}\geq0$) }\label{fig:rellin}
     \centering
     \end{subfigure}
\begin{subfigure}[b]{0.49\columnwidth}
  \centering
       
	\begin{tikzpicture}[scale = 1]
\begin{axis}[ 
 width=0.8\columnwidth, 
 height=0.7\columnwidth,  
 xmajorgrids, 
 xlabel={$\lambda_{\min}$},
 ymajorgrids, 
 ylabel={$\lambda_{\max}$},
 zlabel={Optimal rate},
zmin = 0,
zmax = 1,
xmin = 0,
xmax = 1,
ymin = 0,
ymax = 1,
 view={20}{35},
 ]

\addplot3[patch,patch type=triangle,color=black, opacity=0.5,faceted color=black] 
file { Figures/comp_r.dat  };

\addplot3[patch,patch type=triangle,color=red, opacity=0.5,faceted color=red] 
file { Figures/comp_i.dat  };

\addplot3[patch,patch type=triangle,color=green!50!black, opacity=0.3,faceted color=green!50!black] 
file { Figures/comp_a.dat  };

\addplot3[patch,patch type=rectangle,color=white, opacity=1.0,faceted color=white] 
coordinates{ 
(-0.04, 0, 0)
(-0.04, 0, 1)
(-0.04, 1, 1) 
(-0.04, 0.8, 0.55) };

\end{axis}
\end{tikzpicture}
    \caption{Best rate obtained with one of the three optimal modifications when the eigenvalues interval $[\lambda_{\min},\lambda_{\max}]$ varies. If the best rate is attained by relaxation, it is displayed in \textbf{black}; by inertia, in {\color{red} \textbf{red}}; by alt. inertia, in {\color{green!50!black} \textbf{green}}.  }\label{fig:lin}
\end{subfigure}
\caption{Effect of studied modification on linear iterations rate}
 \end{figure}

In general, comparison between relaxation and inertia on linear iterations depends on the interval $[\lambda_{\min}, \lambda_{\max}]$ in which the eigenvalues of the original matrix $R$ live.  Fig.~\ref{fig:rellin} provides a graphical illustration of the effect of relaxation on the linear rate. Fig.~\ref{fig:lin} displays a 3D plot of the optimal rate obtained by numerical simulations (the lower the better) when $\lambda_{\min}$ and $\lambda_{\max}$ vary between $0$ and $1$ along with the modification scheme attaining it. 

One can notice that the optimal speed with inertia (alternated or classical) is always faster than with relaxation when $\lambda_{\min} = 0$. For instance, it is the case for the gradient algorithm setup of Ex.~\ref{ex:1}; the equality case being when $\mu/L = 1$. Between alternated and classical inertia, the alternated version is faster for well enough conditioned problems, more precisely when $1 \geq \mu/L \geq 4/(9+4\sqrt{2}) \approx 0.273$; surprisingly making alternated inertia more performing than both inertia and relaxation for some problems.  Also, the optimal parameter for inertia  $\gamma^\star$ is greater than theoretical limit $1/3$ as soon as $\mu/L\leq 1/4$. Similarly, for alternated inertia, optimal $\gamma^\star$ is greater than $1$ when $ \mu/L\leq (3-\sqrt{2})/4 \approx 0.396 $.

Unfortunately, while $L$ can often be known or upper bounded, $\mu$ is in general unknown so that the optimal parameters cannot be computed hence the need for automatically tuned schemes as developed further in this paper.

\section{Online Acceleration of Linear Rates using Relaxation and Inertia} 
\label{sec:acc}

In this Section, we provide practical acceleration algorithms for fixed point iterations of general averaged operators using relaxation and inertia.

These methods, that automatically tune relaxation/inertia parameters, are based on affine approximation with real eigenvalues, as investigated in the previous section. This may appear limiting at first but i) in practice, linear approximation of averaged operators often have dominant eigenvalues close to the real line (real eigenvalues are linked to the \emph{cyclic monotonicity property} which appears when considering (sub)-gradients, see \cite{shiu1976cyclically} and \cite[Theo.~22.14]{livre-combettes}; ii) similar reasoning have been used in recent proofs of inertial algorithms \cite{flammarion2015averaging};  iii) we prove the iterates convergence in the general averaged operator case (not just affine let alone with real eigenvalues) and iv) our method works very well in practice as demonstrated in Section~\ref{sec:app}.

For all three modifications, we will iterate in the same steps:\\ 
From some acceleration parameter $\delta$, \\
i) Apply the accelerated operator $\mathsf{T}_\delta$ on the current iterates and estimate its current rate by computing $v_{k} = \|\mathsf{T}_\delta(x_{k-1})-\mathsf{T}_\delta(x_{k-2})\|/\|x_{k-1} - x_{k-2}\|$ as in Sec.~\ref{sec:linear};\\
ii) From $\delta$ and $v_k$, construct an approximation of the \emph{virtual} dominant eigenvalue $\lambda$ of original operator $\mathsf{T}$ using the results of the previous section (\emph{virtual} as $\mathsf{T}$ is a general non-linear averaged operator, $\lambda$ is thus linked to an affine approximation of $\mathsf{T}$);\\
iii) From $\lambda$, update $\delta$ as the optimal acceleration parameters previously derived.

\subsection{ORM: Online Relaxation Method}
\label{sec:relaxopt}

Building on the derivations of \\Sec.~\ref{sec:affrel}, we wish to estimate $\eta^\star$ without having access to the spectrum of $R$. \\
i) To do so, we estimate the current convergence rate as\footnote{Note the extra factor $\eta_{k-1}/\eta_{k}$ compared to Sec.~\ref{sec:linear}. In the specific case of relaxation, this modified definition enables to estimate the convergence of $\mathsf{T}_{\eta_{k}}$ by applying it only once. Monotone operators theory ensures us that $v_k\in[0,1]$, and enables the convergence proof. }\\  $ v_k = (\eta_{k-1} \|x_{k} - x_{k-1}\| )/(\eta_{k} \|x_{k-1} - x_{k-2} \|) $.\\
ii) Using this $v_k$, the current relaxation $\eta_k$, and the expression for  $\nu^\star$, we can compute an estimate for virtual dominant eigenvalue $\lambda$:   $ \lambda_k = (v_k + \eta_k -1)/\eta_k $.\\
iii) Using $ \lambda_k$ and optimal $\eta^\star$, we take our next relaxation parameter as
$$\eta_{k+1} = \frac{2}{ 2\alpha +1 - \lambda_k } = \frac{2\eta_k}{2\alpha \eta_k + 1 - v_k} .$$

This gives the intuition for our \emph{Online Relaxation Method (ORM)}.

\smallskip
\begin{breakbox}
\noindent  {\bf {Online Relaxation Method} (ORM)  {\small for $\alpha$-averaged operator $\mathsf{T}$} }: \vspace*{-0.3cm} \\
\hrule
\vspace*{0.2cm}
\noindent \underline{Initialization:} $\varepsilon \in ]0,2\min(\alpha;1-\alpha)]$, $x^0$, $x^1 = \mathsf{T} x^0$ ,  $\eta^0=\eta^1=1$. \\
\underline{At each iteration $k\geq1$:}
\begin{align*}
\eta_{k+1} &= \frac{(2 - \varepsilon)\eta_k}{ 2 \alpha \eta_k + 1 - \frac{\eta_{k-1} \|x_{k} - x_{k-1}\|}{ \eta_k \|x_{k-1} - x_{k-2} \|} } + \frac{ \varepsilon}{4\alpha}\\
x_{k+1} &= \eta_{k+1} \mathsf{T} x_{k} + (1-\eta_{k+1})x_k 
\end{align*}   
\end{breakbox}
\smallskip

The following result provides convergence guarantees for this method in the general framework of averaged operators. 

\begin{theorem}
\label{th:orm}
Let $\alpha \in ]0,1[$. Let $\op{T}$ be an $\alpha$-averaged operator such that $\fix \op{T} \neq \emptyset $. Then, the sequence $(x_k)_{k>0}$ generated by the Online Relaxation Method converges to a point in $\fix \op{T}$.
\end{theorem}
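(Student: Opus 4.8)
The plan is to reduce everything to the fixed-range convergence result Lemma~\ref{lem:cva}: it suffices to show that the adaptively generated sequence $(\eta_k)$ stays inside a fixed compact sub-interval of $(0,1/\alpha)$ whose endpoints do not depend on $k$. Concretely, I would prove by induction, starting from the initial values $\eta_0=\eta_1=1$, that
\[ \underline\eta\triangleq\frac{\varepsilon}{4\alpha}\le\eta_k\le\frac{4-\varepsilon}{4\alpha}\triangleq\overline\eta\qquad\text{for all }k, \]
noting that $\underline\eta>0$ and $\overline\eta<1/\alpha$ precisely because $\varepsilon>0$. The admissible range $\varepsilon\in\,]0,2\min(\alpha;1-\alpha)]$ is exactly what makes the initialisation lie in $[\underline\eta,\overline\eta]$: $\varepsilon\le2\alpha$ gives $\underline\eta\le\tfrac12\le1$, and $\varepsilon\le2(1-\alpha)$ gives $\overline\eta\ge1$.

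The quantity to control is $v_k$. First I would rewrite it without the relaxation factors: since $x_j-x_{j-1}=\eta_j(\op{T}(x_{j-1})-x_{j-1})$, the $\eta$'s telescope and
\[ v_k=\frac{\eta_{k-1}\|x_k-x_{k-1}\|}{\eta_k\|x_{k-1}-x_{k-2}\|}=\frac{\|(\op{I}-\op{T})(x_{k-1})\|}{\|(\op{I}-\op{T})(x_{k-2})\|}, \]
i.e. $v_k=r_{k-1}/r_{k-2}$ with $r_j\triangleq\|(\op{I}-\op{T})(x_j)\|$. (If some $r_{k-2}=0$ then $x_{k-2}\in\fix\op{T}$, the iteration is stationary, and convergence is immediate; so assume $r_j>0$, which also makes $v_k$ well defined.) Thus $v_k\ge0$ automatically, and $v_k\le1$ is exactly the statement that the residual $r_k$ is non-increasing. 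To establish that, I write $\op{T}=(1-\alpha)\op{I}+\alpha\op{N}$ with $\op{N}$ nonexpansive, so the relaxed step becomes the Krasnoselski\u{\i}--Mann step $x_{k+1}=(1-\beta_{k+1})x_k+\beta_{k+1}\op{N}(x_k)$ with $\beta_{k+1}=\alpha\eta_{k+1}$. A short computation gives $(\op{I}-\op{N})(x_{k+1})=(1-\beta_{k+1})(\op{I}-\op{N})(x_k)+\big(\op{N}(x_k)-\op{N}(x_{k+1})\big)$, and the triangle inequality together with nonexpansiveness (using $\|x_{k+1}-x_k\|=\beta_{k+1}\|(\op{I}-\op{N})(x_k)\|$) yields $\|(\op{I}-\op{N})(x_{k+1})\|\le\|(\op{I}-\op{N})(x_k)\|$ whenever $\beta_{k+1}\in[0,1]$, i.e. whenever $\eta_{k+1}\le1/\alpha$. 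Since $r_k=\alpha\|(\op{I}-\op{N})(x_k)\|$, this is residual monotonicity. This is the ``monotone operators theory'' invoked in the footnote, and it is the single analytic ingredient on which the scheme rests.

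With this in hand the induction closes. Assuming $\eta_j\in[\underline\eta,\overline\eta]$ for all $j\le k$, in particular $\eta_{k-1}\le\overline\eta<1/\alpha$, residual monotonicity at the step $x_{k-2}\mapsto x_{k-1}$ gives $r_{k-1}\le r_{k-2}$, hence $v_k\in[0,1]$. Using $v_k\le1$ to bound the (necessarily positive) denominator below by $2\alpha\eta_k$,
\[ \eta_{k+1}=\frac{(2-\varepsilon)\eta_k}{2\alpha\eta_k+1-v_k}+\frac{\varepsilon}{4\alpha}\le\frac{2-\varepsilon}{2\alpha}+\frac{\varepsilon}{4\alpha}=\frac{4-\varepsilon}{4\alpha}=\overline\eta, \]
while positivity of the first summand gives $\eta_{k+1}\ge\varepsilon/(4\alpha)=\underline\eta$. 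This is the inductive step. Feeding the uniform bounds $0<\underline\eta\le\eta_k\le\overline\eta<1/\alpha$ into Lemma~\ref{lem:cva} then yields convergence of $(x_k)$ to a point of $\fix\op{T}$. I expect the main obstacle to be the apparent interdependence between the two quantities being controlled: bounding $\eta_{k+1}$ needs $v_k\le1$, while $v_k\le1$ needs $\eta_{k-1}\le1/\alpha$. I would stress that this is not genuine circularity, since $v_k\le1$ only ever invokes already-established bounds on \emph{earlier} $\eta_j$; the induction ordering resolves it cleanly.
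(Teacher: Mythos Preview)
Your proposal is correct and follows essentially the same route as the paper: both prove by induction that $\eta_k\in[\varepsilon/(4\alpha),\,1/\alpha-\varepsilon/(4\alpha)]$, the key step being $v_k\le 1$, obtained via the decomposition $\op{T}=(1-\alpha)\op{I}+\alpha\op{N}$ with $\op{N}$ nonexpansive and the triangle inequality on $(\op{I}-\op{N})(x_{k-1})$; then Lemma~\ref{lem:cva} concludes. Your framing of $v_k=r_{k-1}/r_{k-2}$ as residual monotonicity and your explicit handling of the $\varepsilon$-range for the base case and of the induction ordering are slightly cleaner than the paper's write-up, but the argument is the same.
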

\begin{proof}
In order to use Lemma~\ref{lem:cva} to prove the convergence, let us prove by induction that for all $k\geq 1$, $\eta_k \in [ \frac{\varepsilon }{4\alpha} , \frac{1}{\alpha} -  \frac{\varepsilon }{4\alpha} ]$. It is obviously true for $\eta^1 = 1$. Let us assume that $\eta_k \in [ \frac{\varepsilon }{4\alpha} , \frac{1}{\alpha} -  \frac{\varepsilon }{4\alpha} ]$.\\
First, as $\op{T}$ is $\alpha$-averaged, it writes $\op{T} = \alpha \op{R} + (1-\alpha)\op{I}$ with $\op{R}$ a non-expansive operator and $\op{I}$ the identity. $\op{T}_{\eta_k}$ then writes $\op{T}_{\eta_k} = \alpha \eta_k \op{R} + (1-\alpha\eta_k)\op{I}$, thus
\begin{align*}
\|&x_{k} - x_{k-1} \| = \alpha\eta_k \|\op{R} (x_{k-1}) - x_{k-1} \| \\
&=  \alpha\eta_k \|\op{R} (x_{k-1}) - \op{R}(x_{k-2}) + (1-\alpha \eta_{k-1})(\op{R}(x_{k-2}) - x_{k-2}  ) \| \\
&\leq \alpha\eta_k \|x_{k-1} - x_{k-2} \| + \alpha\eta_k (1-\alpha \eta_{k-1}) \|\op{R}(x_{k-2}) - x_{k-2} \| \\
&=  \alpha\eta_k \|x_{k-1} - x_{k-2} \| + \alpha\eta_k \frac{1-\alpha \eta_{k-1}}{\alpha \eta_{k-1}} \|x_{k-1} - x_{k-2} \| =  \frac{\eta_k}{\eta_{k-1}} \|x_{k-1} - x_{k-2} \|
\end{align*}

Thus, we have $ v_k \leq 1$ which makes $\eta_{k+1} \geq   \frac{ \varepsilon}{4\alpha} $. Now,
\begin{align*}
\eta_{k+1} \leq \frac{(2-\varepsilon) \eta_k}{ 2 \alpha \eta_k  } + \frac{ \varepsilon}{4\alpha} = \frac{1}{\alpha}  - \frac{\varepsilon }{2\alpha} + \frac{\varepsilon }{4\alpha} = \frac{1}{\alpha}  - \frac{\varepsilon }{4\alpha}
\end{align*}
thus we have that $\eta_{k+1} \in [ \frac{\varepsilon }{4\alpha} , \frac{1}{\alpha} -  \frac{\varepsilon }{4\alpha} ]$. This means the generated sequence $(\eta_k)_{k>0}$ lies in $[ \frac{\varepsilon }{4\alpha} , \frac{1}{\alpha} -  \frac{\varepsilon }{4\alpha} ]$ and thus verifies the conditions of Lemma~\ref{lem:cva} for convergence.
\end{proof}

Interestingly, one can notice that when the basis algorithm converges sub-linearly, the paramter chosen by ORM becomes close to $2/L$. For the gradient algorithm, this would amount to having a stepsize that becomes close to $2/L$ as the number of iterations grow which is coherent with the optimality results in \cite[Sec.~4.1.1]{taylor2015smooth}\footnote{The optimal stepsize when doing $K$ iterations goes to $2/L$, staying strictly below, when $K\to\infty$.}.

\subsection{OIM: Online Inertia Method}
\label{sec:inertiaopt}

An online inertia method can be proposed based on the same principles as ORM building on Sec.~\ref{sec:affiner}. In the same vein, we approximate the operation $\op{T}^{\gamma_{2k}}$ by an affine operator with non-negative eigenvalues.\\
i) We estimate the current convergence rate related to operator $\op{T}^{\gamma_{2k}}$ (by applying twice the same inertia twice) as\\ $v_{2k} = \sqrt{ \frac{ \|x_{2k+2} - x_{2k+1} \|^2 +  \|x_{2k+1} - x_{2k} \|^2 }{ \|x_{2k+1} - x_{2k} \|^2 +  \|x_{2k} - x_{2k-1} \|^2} } $.\\
ii) Using $v_{2k}$, current inertia $\gamma_{2k}$, we estimate $\lambda$:  $ \lambda_{2k} = ((v_{2k})^2)/(\gamma_{2k} v_{2k} - \gamma_{2k} + v_{2k})  $.\\
iii) Using $ \lambda_{2k}$ and the formula for optimal $\gamma^\star$, we take our next relaxation parameter as
$\gamma_{2k+2} = (1 - \sqrt{1-\lambda_{2k}})^2/\lambda_{2k}$.

These steps are at the core of our \emph{Online Inertia Method (OIM)}. However, to the difference of ORM but similarly to other inertia-based accelerations \cite{goldstein2014fast}, a restart mechanism has to be introduced to make sure the algorithm converges. Indeed, this scheme, which is rather aggressive, often overpasses the theoretical limits of the convergence results. Thus, in order to maintain convergence, the algorithm must either i)  sufficiently decrease the error $\|x_k - y_k\|$; or ii) set inertial parameter $\gamma_k$ to $0$ so that classical convergence results apply.

\smallskip
\begin{breakbox}
\noindent  {\bf Online Inertia Method (OIM)  {\small for $\alpha$-averaged operator $\mathsf{T}$}}:\vspace*{-0.3cm} \\
\hrule
\vspace*{0.2cm}
\noindent \underline{Initialization:} $x_{1}$, $x_2 = \mathsf{T}(x_1)$, $y_2=x_1$ $\gamma_2 = 0$, $\varepsilon>0$. \\
\underline{For each $k\geq 1$:}\\
\hspace*{1.5cm}$\left\{ \begin{array}{rl} y_{2k+1} &= x_{2k} + \gamma_{2k}(x_{2k} - x_{2k-1}) \\
x_{2k+1} &= \op{T}(y_{2k+1}) \\
y_{2k+2} &= x_{2k+1} + \gamma_{2k}(x_{2k+1} - x_{2k}) \\
x_{2k+2} &= \op{T}(y_{2k+2}) \end{array} \right. $\\[0.2cm]
\hspace*{1.5cm} $c_{2k} = \max\left( \frac{\|x_{2k+2}-y_{2k+2}\|}{\|x_{2k+1}-y_{2k+1}\|}  ; \frac{\|x_{2k+1}-y_{2k+1}\|}{\|x_{2k}-y_{2k}\|}  \right)$\\[0.2cm]
\hspace*{1.5cm} \textbf{if}  $ c_{2k} \leq 1 - \varepsilon $   \hspace*{2.5cm} {\small [Acceleration] }\\[0.2cm]
\hspace*{2.5cm} $v_{2k} = \sqrt{ \frac{ \|x_{2k+2} - x_{2k+1} \|^2 +  \|x_{2k+1} - x_{2k} \|^2 }{ \|x_{2k+1} - x_{2k} \|^2 +  \|x_{2k} - x_{2k-1} \|^2} } $\\
\hspace*{2.5cm} $\lambda_{2k} =  \min\left(  \frac{(v_{2k})^2}{\gamma_{2k} v_{2k} - \gamma_{2k} + v_{2k}} ; 1-\varepsilon \right)  $\\
\hspace*{2.5cm} $\gamma_{2k+2} =  \max\left(0 ; \frac{ (1 - \sqrt{1-\lambda_{2k}})^2}{\lambda_{2k}} \right)$\\
\hspace*{1.5cm} \textbf{elseif }  $ \gamma_{2k} > 0$   \hspace*{2.3cm} {\small [Restart] } \\
\hspace*{2.5cm} $\gamma_{2k+2} =  0 $\\
\hspace*{2.3cm} {\small $(x_{2k+1} , x_{2k+2} , y_{2k+2} ) = (x_{2k-1} ,  x_{2k} , y_{2k})$}\\
\hspace*{1.5cm} \textbf{elseif  }  $ \gamma_{2k} = 0$ \hspace*{2.3cm} {\small [No Acceleration] }\\
\hspace*{2.5cm} $\gamma_{2k+2} =  0 $\\
\end{breakbox}
\smallskip

\begin{theorem}
\label{th:oim}
Let $\alpha \in ]0,1[$. Let $\op{T}$ be an $\alpha$-averaged operator such that $\fix \op{T} \neq \emptyset $. Then, the sequence $(y_k)_{k>0}$ generated by the Online Inertia Method converges in the sense that $\|\op{T}(y_k) - y_k \| \to 0$. Furthermore, if $\fix \op{T}$ is reduced to a single point $x^\star$, $x_k \to x^\star$.
\end{theorem}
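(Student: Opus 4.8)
The plan is to work throughout with the residual $r_m \triangleq \|\op{T}(y_m) - y_m\|$. Since the method always sets $x_m = \op{T}(y_m)$, we have $r_m = \|x_m - y_m\|$, so the assertion $\|\op{T}(y_k)-y_k\|\to 0$ is exactly $r_k\to 0$. Two preliminary observations will drive everything. First, writing $\op{T} = (1-\alpha)\op{I} + \alpha\op{R}$ with $\op{R}$ nonexpansive, the coefficient produced in the acceleration branch is $\gamma = (1-s)/(1+s)$ with $s = \sqrt{1-\lambda_{2k}} \ge \sqrt{\varepsilon}$ (since $\lambda_{2k}\le 1-\varepsilon$), so $0\le \gamma_k \le \bar\gamma \triangleq (1-\sqrt\varepsilon)/(1+\sqrt\varepsilon) < 1$ for every $k$. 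Second, $c_{2k}$ is precisely $\max\{r_{2k+2}/r_{2k+1},\, r_{2k+1}/r_{2k}\}$, so I can track the effect of each branch on the accepted residual $r_{2k}$: the \textbf{[Acceleration]} branch forces $r_{2k+1}\le(1-\varepsilon)r_{2k}$ and $r_{2k+2}\le(1-\varepsilon)^2 r_{2k}$; the \textbf{[Restart]} branch rolls the state back so that $r_{2k+2}=r_{2k}$ and sets $\gamma_{2k+2}=0$; and the \textbf{[No Acceleration]} branch ($\gamma_{2k}=0$) consists of two genuine Krasnoselski\u{\i}--Mann steps, for which nonexpansiveness of $\op{T}$ gives $r_{2k+2}\le r_{2k+1}\le r_{2k}$. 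In all three cases $r_{2k}$ is nonincreasing, so $r_{2k}\downarrow r_\infty\ge 0$.

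For the first claim I would split on the number of [Acceleration] pairs. If there are infinitely many, then $r_{2k}$ is contracted by $(1-\varepsilon)^2$ infinitely often while being globally nonincreasing, forcing $r_\infty=0$. If there are only finitely many, then after the last one every pair has $\gamma_{2k}=0$ (a [Restart] requires $\gamma_{2k}>0$, which is produced only by an [Acceleration] pair), so the tail of the run is exactly the plain iteration $x_{m+1}=\op{T}(x_m)$; Lemma~\ref{lem:cv} then gives $x_m\to\bar x\in\fix\op{T}$ and hence $r_m=\|x_{m+1}-x_m\|\to 0$. Either way $r_{2k}\to 0$, and since each accepted $r_{2k+1}\le r_{2k}$, all accepted residuals tend to $0$ (the pair discarded by a restart is transient and does not belong to the generated sequence).

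For boundedness and convergence, fix $\bar x\in\fix\op{T}$ and monitor $a_k\triangleq\|x_{2k}-\bar x\|$. Nonexpansiveness gives the per-step bound $\|x_{m+1}-\bar x\|\le\|x_m-\bar x\|+\bar\gamma\, d_m$ with $d_m\triangleq\|x_m-x_{m-1}\|$, while expanding through $y_m$ yields the displacement recursion $d_m\le r_m+\bar\gamma\, d_{m-1}$. [No Acceleration] and [Restart] pairs never increase $a_k$ (the former by the averaged descent inequality, the latter because the rollback restores the state exactly), so all growth comes from inertial [Acceleration] pairs, which occur in runs. A run is triggered by a plain [Acceleration] pair whose last step is an ordinary KM step, so the displacement entering the run equals $r_{2k_0}$, the residual at the run's start; inside the run $r$ decays geometrically, so the recursion gives $\sum_{\mathrm{run}} d_m\le C\, r_{2k_0}$ and the total increase of $a_k$ over the run is at most $\bar\gamma C\, r_{2k_0}$. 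The key point is that a nonempty inertial run decreases the residual by at least $(2\varepsilon-\varepsilon^2)\,r_{2k_0}$, so $r_{2k_0}$ is bounded by a constant times the residual decrease over that run; since $r_{2k}$ is globally nonincreasing and distinct runs occupy disjoint residual intervals, these decreases telescope and sum to at most $r_2$. Hence $\sum(\text{increases of }a_k)<\infty$, so $(a_k)$ is bounded and in fact convergent (the sequence is quasi-Fej\'er); together with $r_m\to 0$ this bounds the whole sequence. Finally, any limit point of $(y_k)$ is a fixed point by continuity of $\op{T}$ and $r_m\to 0$, so when $\fix\op{T}=\{x^\star\}$ boundedness plus a unique admissible limit point give $y_k\to x^\star$ and thus $x_k=\op{T}(y_k)\to x^\star$.

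The main obstacle is exactly this boundedness step: because acceleration is allowed to move the iterates away from the fixed point, one must prove the accumulated increases are summable, yet the natural estimate from the descent inequality controls only $\sum r_m^2$, not $\sum r_m$. The device that resolves this is to \emph{charge} each inertial run's increase against the residual decrease it produces — exploiting that the displacement entering a run equals the starting residual and that the residual is globally monotone — so that the increases telescope against the single finite budget $r_2$ rather than requiring square-summability.
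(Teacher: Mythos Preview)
Your argument for the first claim, $\|\op{T}(y_k)-y_k\|\to 0$, follows exactly the paper's route: the same three-branch bookkeeping on the residual $r_m=\|x_m-y_m\|$, the same case split on whether there are finitely or infinitely many acceleration pairs, and the same appeal to Lemma~\ref{lem:cv} for the eventual-KM tail. Your branch analysis is in fact a bit more careful than the paper's (you explicitly verify $r_{2k+2}\le r_{2k+1}\le r_{2k}$ in the no-acceleration branch via nonexpansiveness, whereas the paper just states it).

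Where you diverge from the paper is in the second claim. The paper simply writes that the accumulation points of $(y_k)$ lie in $\fix\op{T}$ and that, when this set is a singleton, $(y_k)$ converges to it---without ever arguing that $(y_k)$ is bounded, which is needed for that implication. You correctly identify this as the crux and supply a genuine boundedness argument: a quasi-Fej\'er estimate in which the only increases of $\|x_{2k}-\bar x\|$ occur during inertial runs, each run's increase is controlled by its entry residual (via the displacement recursion $d_m\le r_m+\bar\gamma d_{m-1}$ seeded by the KM pair that launches the run), and these entry residuals are summable because the globally nonincreasing residual drops by a fixed fraction across each run. This ``charging'' device is sound, and it is something the paper does not provide. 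What it buys you is an honest proof of $x_k\to x^\star$ in the infinitely-many-accelerations regime; what the paper's shortcut buys is brevity at the cost of a gap. Your write-up of this step is terse and would benefit from making the indices of ``run start'' explicit (the first pair of a run has $\gamma=0$ and is itself Fej\'er; inertia and the possible increase begin at the \emph{next} pair), but the mechanism is correct.
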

\begin{proof}
The proof follows the same reasoning as \cite[Theo. 3]{goldstein2014fast}. At each iteration, one of the following situation happens:\\
\emph{i)} the last iteration was beneficial: $c_{k} \leq 1 - \varepsilon$ so that $\|x_{k}-y_{k}\| \leq (1-\varepsilon) \|x_{k-1}-y_{k-1}\|  $ and $\|x_{k-1}-y_{k-1}\| \leq (1-\varepsilon) \|x_{k-2}-y_{k-2}\|  $ ;\\
\emph{ii)} a restart is made so that the iterates $x_{k}$ and $x_{k-1}$ by their previous values $\|x_{k}-y_{k}\| = \|x_{k-2}-y_{k-2}\| $ and $\|x_{k-1}-y_{k-1}\| = \|x_{k-3}-y_{k-3}\| $;\\
\emph{iii)} there is no acceleration and non expansiveness gives $\|x_{k}-y_{k}\|  \leq \|x_{k-1}-y_{k-1}\|  \leq \|x_{k-2}-y_{k-2}\| $.\\
To conclude the proof, one has to notice that for all $k>0$,  $\|x_{k}-y_{k}\|  \leq \|x_{k-1}-y_{k-1}\|$ and $\|x_{k}-y_{k}\|  \leq (1-\varepsilon) \|x_{k-1}-y_{k-1}\|$ if \emph{i)} happens. Now, if there is a finite number of beneficial iterations (when \emph{i)} happens), then after the last one, the algorithm goes back to the unaccelerated iterations and convergence is ensured by Lemma~\ref{lem:cv}. If there is a infinite number of beneficial iterations, introducing variable $\iota_k$ as $\iota_k = 1$ if iteration $k$ is beneficial and $0$ elsewhere; we have
\begin{align*}
\sum_{k=1 }^{\infty}  \iota_k \| \op{T}(y_k) - y_k \| \leq  \| \op{T}(x^0) - x^0  \| \sum_{k=1}^{\infty} \prod_{\ell=1}^k (1-\varepsilon)^{\iota_\ell} \leq  \frac{\| \op{T}(x^0) - x^0  \|}{\varepsilon} < \infty
\end{align*}
and thus $\| \op{T}(y_k) - y_k \| \to 0$. This means that the accumulation points of $(y_k)$ are in $\fix \op{T}$. In addition, if it is reduced to a single point, then $(y_k)$ converges to it and as $\|x_k - y_k \| \to 0$, so does $(x_k)$. 
\end{proof}


When the convergence is sublinear, the restart condition based on a constant $\varepsilon$ may be too harsh. Following the convergence proof, one can easily deduce that $\varepsilon$ can be taken as a sequence $(\varepsilon^\ell)$ provided that $1/\varepsilon^\ell = o(\ell) $ where $\ell$ is the number of accelerations. For instance, a typical setting is to keep track of the number of accelerations $\ell$ and take $\varepsilon^\ell = \varepsilon_0/\sqrt{\ell}$.  Note that in the sublinear case, the OIM makes the acceleration parameter go to $1$ as in Nesterov's optimal method \cite{nesterov1983method}.

\subsection{OAIM: Online Alternated Inertia Method}
\label{sec:ainertiaopt}

Using the same reasoning as for OIM, we are able to obtain a similar algorithm.

\smallskip
\begin{breakbox}
\noindent  {\bf Online Alternated  Inertia Method (OAIM) {\small for an $\alpha$-averaged operator $\mathsf{T}$}}: \vspace*{-0.3cm} \\
\hrule
\vspace*{0.2cm}
\noindent \underline{Initialization:} $x_{3}$, $x_4 = \mathsf{T}(x_3)$, $y_4=x_3$ $\gamma_4 = 0$, $\varepsilon>0$. \\
\underline{For each $k\geq 1$:}\\
$\left\{ \begin{array}{rl} y_{4k+1} &= x_{4k} + \gamma_{4k}(x_{4k} - x_{4k-1}) \\
x_{4k+1} &= \op{T}(y_{4k+1}) \\
y_{4k+2} &= x_{4k+1} \\
x_{4k+2} &= \op{T}(y_{2k+2}) \\  \end{array} \right. ~~~ \left\{ \begin{array}{rl}  y_{4k+3} &= x_{4k+2} + \gamma_{4k}(x_{4k+2} - x_{4k+1}) \\
x_{4k+3} &= \op{T}(y_{4k+3}) \\
y_{4k+4} &= x_{4k+3} \\
x_{4k+4} &= \op{T}(y_{4k+4}) \end{array} \right.$\\[0.2cm]
\hspace*{1.5cm} $c_{4k} = \max\left( \frac{\|x_{4k+4}-x_{4k+3}\| }{ \|x_{4k+2}-x_{4k+1}\|} ; \frac{\|x_{4k+2}-x_{4k+1}\| }{ \|x_{4k}-x_{4k-1}\|} \right) $\\[0.2cm]
\hspace*{1.5cm} \textbf{if}  $ c_{4k} \leq 1 - \varepsilon $   \hspace*{2.3cm} {\small [Acceleration] }\\
\hspace*{2cm} $v_{4k} = \frac{\|x_{4k+4}-x_{4k+2}\| }{ \|x_{4k+2}-x_{4k}\|} $\\
\hspace*{2cm} $\lambda_{4k} = \min\left( \frac{\gamma_{4k} + \sqrt{ (\gamma_{4k})^2 + 4 \gamma_{4k} v_{4k} + 4v_{4k}} }{2(\gamma_{4k} +1)}; 1 - \varepsilon \right) $\\
\hspace*{2cm} $\gamma_{4k+4} =  \frac{ 2(\lambda_{4k})^2 + (\sqrt{2}-1)\lambda_{4k}}{2\lambda_{4k}(1-\lambda_{4k}) + \frac{1}{2}}  $\\
\hspace*{1.5cm} \textbf{elseif }  $ \gamma_{4k} > 0$   \hspace*{2.3cm} {\small [Restart] } \\
\hspace*{2cm} $\gamma_{4k+4} =  0 $\\
\hspace*{2cm} {\small $(x_{4k+3} , x_{4k+4} ) = (x_{4k-1} ,  x_{4k} )$}\\
\hspace*{1.5cm} \textbf{elseif  }  $ \gamma_{4k} = 0$ \hspace*{2.3cm} {\small [No Acceleration] }\\
\hspace*{2cm} $\gamma_{4k+4} =  0 $
\end{breakbox}
\smallskip

\begin{theorem}
Let $\alpha \in ]0,1[$. Let $\op{T}$ be an $\alpha$-averaged operator such that $\fix \op{T} \neq \emptyset $. Then, the sequence $(x_k)_{k>0}$ generated by the Online Alternated Inertia Method converges in the sense that $\|\op{T}(x^{2k}) - x^{2k} \| \to 0$. Furthermore, if $\fix \op{T}$ is reduced to a single point $x^\star$, $x_{k} \to x^\star$.
\end{theorem}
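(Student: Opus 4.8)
The plan is to mirror the proof of Theorem~\ref{th:oim} exactly, since OAIM is built on the same restart architecture as OIM, only with the alternated inertia block of four iterations playing the role of OIM's two-iteration block. The key quantity driving the argument is the residual $\|\op{T}(x^{2k})-x^{2k}\|$ measured at the even (non-inertial) iterates, and the restart test $c_{4k}\leq 1-\varepsilon$ which, when passed, certifies a geometric decrease of this residual by a factor $(1-\varepsilon)$. First I would set up the trichotomy: at each outer iteration exactly one of three cases occurs, namely \emph{(i)} an accelerated step passing the test $c_{4k}\leq 1-\varepsilon$, \emph{(ii)} a restart when $\gamma_{4k}>0$ but the test fails, or \emph{(iii)} a plain unaccelerated step when $\gamma_{4k}=0$.

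In case \emph{(i)}, the definition of $c_{4k}$ as the maximum of two consecutive residual ratios gives two successive contractions of the residual by $(1-\varepsilon)$; in case \emph{(ii)}, the reassignment $(x_{4k+3},x_{4k+4})=(x_{4k-1},x_{4k})$ restores the residual to its pre-acceleration value, so no increase is incurred relative to the last accepted state; in case \emph{(iii)}, Lemma~\ref{lem:cvalt} (alternated inertia with $\gamma_k=0$, i.e. plain Krasnoselski\u\i--Mann steps, which are admissible since $0\leq\frac{1-\alpha}{\alpha}$) together with the non-expansiveness of $\op{T}$ guarantees the residual is non-increasing. I would then introduce the indicator $\iota_k$ equal to $1$ at accelerated iterations and $0$ otherwise, and argue the telescoping/product bound
\begin{equation*}
\sum_{k}\iota_k\,\|\op{T}(x^{2k})-x^{2k}\| \;\leq\; \|\op{T}(x^0)-x^0\|\sum_{k}\prod_{\ell\leq k}(1-\varepsilon)^{\iota_\ell}\;\leq\;\frac{\|\op{T}(x^0)-x^0\|}{\varepsilon}\;<\;\infty,
\end{equation*}
exactly as in the OIM proof, which forces $\|\op{T}(x^{2k})-x^{2k}\|\to0$.

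To finish, I would split on whether accelerations happen finitely or infinitely often. If only finitely many accelerations occur, then after the last one the scheme reverts permanently to unaccelerated iterations and Lemma~\ref{lem:cvalt} (or equivalently Lemma~\ref{lem:cv} on the doubled operator) delivers convergence of $(x_k)$ to a fixed point directly. If infinitely many occur, the summability bound above yields $\|\op{T}(x^{2k})-x^{2k}\|\to0$, so every accumulation point of $(x^{2k})$ lies in $\fix\op{T}$; and when $\fix\op{T}$ is a singleton $x^\star$, boundedness of the iterates plus the vanishing residual pin the whole sequence to $x^\star$, with the inertial and intermediate iterates following since the step differences vanish.

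The main obstacle I anticipate is verifying carefully that the restart reassignment genuinely prevents the residual from growing across a failed acceleration: in the alternated scheme the accelerated quantity is a \emph{two-step} residual $\|x_{4k+2}-x_{4k}\|$-type object rather than the single-step $\|x_k-y_k\|$ of OIM, and one must confirm that the bookkeeping in $c_{4k}$ (comparing $\|x_{4k+4}-x_{4k+3}\|$, $\|x_{4k+2}-x_{4k+1}\|$, $\|x_{4k}-x_{4k-1}\|$) correctly tracks the monotone quantity used in the telescoping bound. I would need to check that the intermediate (non-inertial) half-steps within each block do not break the non-expansiveness estimate, which is precisely where Lemma~\ref{lem:cvalt} — proved for $\alpha$-averaged operators under $0\leq\gamma_k\leq\frac{1-\alpha}{\alpha}$ — must be invoked to control the unaccelerated and restarted cases.
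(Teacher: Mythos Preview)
Your proposal is correct and matches the paper's approach exactly: the paper's proof of this theorem consists of the single sentence ``The proof follows the same steps as the proof of Theo.~\ref{th:oim},'' and your write-up is precisely a careful unpacking of that claim for the four-step alternated block. Your anticipated obstacle about the bookkeeping of $c_{4k}$ is the only place requiring care, but it is handled just as you describe via non-expansiveness and the restart reassignment.
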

\begin{proof}
The proof follow the same steps as the proof of Theo.~\ref{th:oim}.
\end{proof}


\section{Relaxation and Inertia of Optimization algorithms} 
\label{sec:app} 

We now particularize the operator $\op{T}$ to different values corresponding to popular algorithms of the literature. We illustrate the interest of the modifications studied and, most importantly, we demonstrate the acceleration provided by our online methods over three popular algorithms: the proximal gradient algorithm, the ADMM, and a Primal-Dual algorithm  by Condat \cite{condat2013primal}.

For each of these algorithms, we will proceed in the same fashion:\\
\emph{1)} We discuss how relaxation and inertia translate for these algorithms along with a review on existing accelerated versions;\\
\emph{2)} We provide numerical illustrations over the three following functions chosen for their differences in terms of smoothness and strong convexity:
\begin{itemize}
\item[\emph{a)}] lasso:
\begin{align*}
\min_{x\in\mathbb{R}^n} F_a(x) = \underbrace{\frac{1}{2} \left\| Ax-b \right\|_2^2}_{f_a(x)} +  \underbrace{\lambda \|x\|_1}_{g_a(x)}
\end{align*}
where $A$ has $m = 100$ examples and $n=300$ observations taken from the normal distribution with zero mean and unit variance, the columns of $A$ are then scaled to have unit norm. $b$ is generated by i) drawing a sparse vector $p \in \mathbb{R}^n$ with $90$ non-zeros entries taken from  the normal distribution with zero mean and unit variance; ii) then creating $b$ as  $b  =  A p + e $ where $e$ is a small white noise taken from the normal distribution with zero mean and standard deviation $\sigma = 0.001$. $\lambda$ is chosen so that the optimal solution has sought sparsity. Lipschitz constant of $\nabla f_a$ is taken equal to true $L=\|A^\mathrm{T} A\|_2$.
\item[\emph{b)}] $\ell_1$-regularized logistic regression:
\begin{align*}
\min_{x\in\mathbb{R}^n} F_b(x) = \underbrace{\frac{1}{m} \sum_{i=1}^m \log\left(1+\exp(-y_i \langle a_i,x\rangle)\right) }_{f_b(x)} +  \underbrace{\lambda \|x\|_1}_{g_b(x)}
\end{align*}
where the couples class/feature vector $(y_i,a_i)\in\{-1,1\}\times \mathbb{R}^n$ are taken from the \texttt{ionosphere} binary classification dataset\footnote{\url{https://archive.ics.uci.edu/ml/datasets/Ionosphere}} which has $m=351$ observations and $n=34$ features. Each feature was normalized to have zero mean and unit variance, the resulting size-$n$ observation vectors are denoted by $(a_i)_{i=1,..,m}$ and the observed classes ${-1,+1}$ are denoted by $(y_i)_{i=1,..,m}$. Lipschitz constant of $\nabla f_b$ is upper bounded by $L'  = \max_i \|a_i\|_2^2 $. $\lambda$ was taken equal to $0.1$.
\item[\emph{c)}] $\ell_2$-regularized logistic regression:
\begin{align*}
\min_{x\in\mathbb{R}^n} F_c(x) = \underbrace{\frac{1}{m} \sum_{i=1}^m \log\left(1+\exp(-y_i \langle a_i,x\rangle)\right) }_{f_c(x)} +  \underbrace{\frac{\lambda}{2} \|x\|_2^2}_{g_c(x)}
\end{align*}
where the couples class/feature vector $(y_i,a_i)\in\{-1,1\}\times \mathbb{R}^n$ are taken from the same dataset, and $\lambda$ was taken equal to $0.01$.
\end{itemize}
For these three functions we computed approximated optimal values by external solvers. For the online algorithms, the convergence-ensuring $\varepsilon$ is set to $10^{-4}$

\subsection{Proximal Gradient Algorithm} ~\\

{\small
\noindent\begin{minipage}{0.98\columnwidth}
{\bfseries Proximal Gradient algorithm for $ \min_x f(x) + g(x) $, $f$ $L$-smooth.  }  \hrulefill
\begin{align*}
x_{k+1} &= \argmin_w \left\{ g(w) + \frac{L}{2} \left\| w  - x_k + \frac{1}{L} \nabla f(x_k) \right\|^2 \right\} 
\end{align*}
\hrule
\end{minipage}}
\smallskip

\subsubsection{Accelerations}

It is straightforward to see that an iteration of the algorithm writes as fixed point iteration $x_{k+1} = \mathsf{T}_{pg}(x_k)$ and monotone operator theory tells us that $\mathsf{T}_{pg}$ is $2/3$-averaged \cite[Chap.~27.3]{livre-combettes}. The application of both relaxation and inertia on top of thus algorithm is thus easy. 

In fact, the proximal gradient algorithm possesses a very popular inertial version with the popular FISTA method (proximal gradient + Nesterov's acceleration) \cite{beck2009fast}.

\subsubsection{Numerical Illustrations}

In Fig.~\ref{fig:ista}, we plot the functional error and the parameters for i) classical proximal gradient algorithm; ii) FISTA; iii) our three online methods (we implemented OIM and OAIM as if $\alpha$ was $1/2$, in coherence with the choice for FISTA). We observe that all proposed algorithms show good behaviors, the less favorable case being \emph{b} as neither functions exhibit strong convexity. Inertia-based methods perform very well: OIM outperforms FISTA except in case \emph{b} and OAIM performs quite well. We notice significant behavioral differences between inertial methods (OIM, FISTA) which show bounces in the error descent, contrary to OAIM which is much more stable with almost no use of restart, and ORM which is provably monotonous.




\begin{figure}
    \centering
    \begin{subfigure}{\textwidth}
        \begin{tikzpicture}[scale=0.48]

 \begin{axis}[ 
 width=\columnwidth, 
 height=0.5\columnwidth, 
 xmin=0, 
 xmax=150, 
 xmajorgrids, 
 xlabel={Number of iterations},
 ymode=log,
 ymin=5e-14, 
 ymax=100, 
 yminorticks=true, 
 ymajorgrids, 
 yminorgrids, 
 ylabel={Functional error $F_a(x_k)-F_a^\star$ }, 
 legend columns=2, 
 legend style={at={(0.99,0.99)},anchor=north east,cells={anchor=west}}, 
 ]

 \addplot [ 
 color=blue, 
 solid, 
 line width=1.0pt, 
 mark size=2.5pt, 
 mark=none, 
 mark options={solid,fill=white,draw=blue}, 
 mark repeat={5}, 
 ] 
 file { Figures/data/pg_lasso_orig.dat };
 \addlegendentry{prox. grad.};

 \addplot [ 
 color=cyan, 
 dashed, 
 line width=1.0pt, 
 mark size=1.5pt, 
 mark=*, 
 only marks ,
 mark options={solid,fill=black,draw=cyan}, 
 mark repeat={2}, 
 ] 
  file { Figures/data/pg_lasso_fista.dat };
 \addlegendentry{FISTA};

 \addplot [ 
 color=black,  
 line width=1.0pt, 
 mark size=1.5pt, 
 mark=square*,
 only marks , 
 mark options={solid,fill=white,draw=black}, 
 mark repeat={2}, 
 ] 
   file { Figures/data/pg_lasso_orm.dat };
 \addlegendentry{ORM};

 \addplot [ 
 color=red, 
 dashed, 
 line width=1.0pt, 
 mark size=1.8pt, 
 mark=triangle*,
 only marks , 
 mark options={solid,fill=white,draw=red}, 
 mark repeat={2}, 
 ] 
    file { Figures/data/pg_lasso_oim.dat };
 \addlegendentry{OIM};

 \addplot [ 
 color=magenta, 
 dashed, 
 line width=1.0pt, 
 mark size=1.5pt, 
 mark=*, 
 only marks ,
 mark options={solid,fill=white,draw=magenta}, 
 mark repeat={2}, 
 ] 
    file { Figures/data/pg_lasso_oaim.dat };
 \addlegendentry{OAIM};

  \end{axis} 
 \end{tikzpicture}
	\begin{tikzpicture} [scale=0.48]

 \begin{axis}[ 
 width=\columnwidth, 
 height=0.5\columnwidth, 
 xmin=0, 
 xmax=150, 
 xmajorgrids, 
 xlabel={Number of iterations},
 ymin=0, 
 ymax=3.9, 
 yminorticks=true, 
 ymajorgrids, 
 yminorgrids, 
 ylabel={Parameters values},
 legend columns=3, 
 legend style={at={(0.99,0.79)},anchor=north east,cells={anchor=west}}, 
 ]

 \addplot [ 
 color=cyan, 
 dashed, 
 line width=1.0pt, 
 mark size=1.5pt, 
 mark=*, 
 only marks ,
 mark options={solid,fill=black,draw=cyan}, 
 mark repeat={2}, 
 ] 
  file { Figures/data/pg_lasso_fistaP.dat };
 \addlegendentry{FISTA};

 \addlegendimage{empty legend} 
 \addlegendentry{}
 
 \addlegendimage{empty legend} 
 \addlegendentry{}

 \addplot [ 
 color=black, 
 dashed, 
 line width=1.0pt, 
 mark size=1.5pt, 
 mark=square*,
 only marks, 
 mark options={solid,fill=white,draw=black}, 
 mark repeat={2}, 
 ] 
   file { Figures/data/pg_lasso_ormP.dat };
 \addlegendentry{ORM};

 \addplot [ 
 color=red, 
 dashed, 
 line width=1.0pt, 
 mark size=1.8pt, 
 mark=triangle*, 
 only marks, 
  mark options={solid,fill=white,draw=red}, 
 mark repeat={2}, 
 ] 
    file { Figures/data/pg_lasso_oimP.dat };
 \addlegendentry{OIM};

 \addplot [ 
 color=magenta, 
 dashed, 
 line width=1.0pt, 
 mark size=1.5pt, 
 mark=*, 
 only marks, 
  mark options={solid,fill=white,draw=magenta}, 
 mark repeat={2}, 
 ] 
     file { Figures/data/pg_lasso_oaimP.dat };
 \addlegendentry{OAIM};

  \end{axis} 
 \end{tikzpicture}
	\caption{lasso  $F_a$}
    \end{subfigure}\\
    \begin{subfigure}{\textwidth}
        \begin{tikzpicture}[scale=0.48]

 \begin{axis}[ 
 width=\columnwidth, 
 height=0.5\columnwidth, 
 xmin=0, 
 xmax=250, 
 xmajorgrids, 
 xlabel={Number of iterations},
 ymode=log,
 ymin=5e-6, 
 ymax=1, 
 yminorticks=true, 
 ymajorgrids, 
 yminorgrids, 
 ylabel={Functional error $F_b(x_k)-F_b^\star$ }, 
 legend columns=2, 
 legend style={at={(0.99,0.99)},anchor=north east,cells={anchor=west}}, 
 ]

 \addplot [ 
 color=blue, 
 solid, 
 line width=1.0pt, 
 mark size=2.5pt, 
 mark=none, 
 mark options={solid,fill=white,draw=blue}, 
 mark repeat={5}, 
 ] 
 file { Figures/data/pg_l1_orig.dat };
 \addlegendentry{prox. grad.};

 \addplot [ 
 color=cyan, 
 dashed, 
 line width=1.0pt, 
 mark size=1.5pt, 
 mark=*, 
 only marks ,
 mark options={solid,fill=black,draw=cyan}, 
 mark repeat={2}, 
 ] 
  file { Figures/data/pg_l1_fista.dat };
 \addlegendentry{FISTA};

 \addplot [ 
 color=black,  
 line width=1.0pt, 
 mark size=1.5pt, 
 mark=square*,
 only marks , 
 mark options={solid,fill=white,draw=black}, 
 mark repeat={2}, 
 ] 
   file { Figures/data/pg_l1_orm.dat };
 \addlegendentry{ORM};

 \addplot [ 
 color=red, 
 dashed, 
 line width=1.0pt, 
 mark size=1.8pt, 
 mark=triangle*,
 only marks , 
 mark options={solid,fill=white,draw=red}, 
 mark repeat={2}, 
 ] 
    file { Figures/data/pg_l1_oim.dat };
 \addlegendentry{OIM};

 \addplot [ 
 color=magenta, 
 dashed, 
 line width=1.0pt, 
 mark size=1.5pt, 
 mark=*, 
 only marks ,
 mark options={solid,fill=white,draw=magenta}, 
 mark repeat={2}, 
 ] 
    file { Figures/data/pg_l1_oaim.dat };
 \addlegendentry{OAIM};

  \end{axis} 
 \end{tikzpicture}
	\begin{tikzpicture} [scale=0.48]

 \begin{axis}[ 
 width=\columnwidth, 
 height=0.5\columnwidth, 
 xmin=0, 
 xmax=250, 
 xmajorgrids, 
 xlabel={Number of iterations},
 ymin=0, 
 ymax=5.0, 
 yminorticks=true, 
 ymajorgrids, 
 yminorgrids, 
 ylabel={Parameters values},
 legend columns=3, 
 legend style={at={(0.99,0.79)},anchor=north east,cells={anchor=west}}, 
 ]

 \addplot [ 
 color=cyan, 
 dashed, 
 line width=1.0pt, 
 mark size=1.5pt, 
 mark=*, 
 only marks ,
 mark options={solid,fill=black,draw=cyan}, 
 mark repeat={2}, 
 ] 
  file { Figures/data/pg_l1_fistaP.dat };
 \addlegendentry{FISTA};

 \addlegendimage{empty legend} 
 \addlegendentry{}
 
 \addlegendimage{empty legend} 
 \addlegendentry{}

 \addplot [ 
 color=black, 
 dashed, 
 line width=1.0pt, 
 mark size=1.5pt, 
 mark=square*,
 only marks, 
 mark options={solid,fill=white,draw=black}, 
 mark repeat={2}, 
 ] 
   file { Figures/data/pg_l1_ormP.dat };
 \addlegendentry{ORM};

 \addplot [ 
 color=red, 
 dashed, 
 line width=1.0pt, 
 mark size=1.8pt, 
 mark=triangle*, 
 only marks, 
  mark options={solid,fill=white,draw=red}, 
 mark repeat={2}, 
 ] 
    file { Figures/data/pg_l1_oimP.dat };
 \addlegendentry{OIM};

 \addplot [ 
 color=magenta, 
 dashed, 
 line width=1.0pt, 
 mark size=1.5pt, 
 mark=*, 
 only marks, 
  mark options={solid,fill=white,draw=magenta}, 
 mark repeat={2}, 
 ] 
     file { Figures/data/pg_l1_oaimP.dat };
 \addlegendentry{OAIM};

  \end{axis} 
 \end{tikzpicture}
	\caption{ $\ell_1$-regularized logistic regression $F_b$}
    \end{subfigure}\\
    \begin{subfigure}{\textwidth}
        \begin{tikzpicture}[scale=0.48]

 \begin{axis}[ 
 width=\columnwidth, 
 height=0.5\columnwidth, 
 xmin=0, 
 xmax=250, 
 xmajorgrids, 
 xlabel={Number of iterations},
 ymode=log,
 ymin=5e-10, 
 ymax=1, 
 yminorticks=true, 
 ymajorgrids, 
 yminorgrids, 
 ylabel={Functional error $F_c(x_k)-F_c^\star$ }, 
 legend columns=2, 
 legend style={at={(0.99,0.99)},anchor=north east,cells={anchor=west}}, 
 ]

 \addplot [ 
 color=blue, 
 solid, 
 line width=1.0pt, 
 mark size=2.5pt, 
 mark=none, 
 mark options={solid,fill=white,draw=blue}, 
 mark repeat={5}, 
 ] 
 file { Figures/data/pg_l2_orig.dat };
 \addlegendentry{prox. grad.};

 \addplot [ 
 color=cyan, 
 dashed, 
 line width=1.0pt, 
 mark size=1.5pt, 
 mark=*, 
 only marks ,
 mark options={solid,fill=black,draw=cyan}, 
 mark repeat={2}, 
 ] 
  file { Figures/data/pg_l2_fista.dat };
 \addlegendentry{FISTA};

 \addplot [ 
 color=black,  
 line width=1.0pt, 
 mark size=1.5pt, 
 mark=square*,
 only marks , 
 mark options={solid,fill=white,draw=black}, 
 mark repeat={2}, 
 ] 
   file { Figures/data/pg_l2_orm.dat };
 \addlegendentry{ORM};

 \addplot [ 
 color=red, 
 dashed, 
 line width=1.0pt, 
 mark size=1.8pt, 
 mark=triangle*,
 only marks , 
 mark options={solid,fill=white,draw=red}, 
 mark repeat={2}, 
 ] 
    file { Figures/data/pg_l2_oim.dat };
 \addlegendentry{OIM};

 \addplot [ 
 color=magenta, 
 dashed, 
 line width=1.0pt, 
 mark size=1.5pt, 
 mark=*, 
 only marks ,
 mark options={solid,fill=white,draw=magenta}, 
 mark repeat={2}, 
 ] 
    file { Figures/data/pg_l2_oaim.dat };
 \addlegendentry{OAIM};

  \end{axis} 
 \end{tikzpicture}
	\begin{tikzpicture} [scale=0.48]

 \begin{axis}[ 
 width=\columnwidth, 
 height=0.5\columnwidth, 
 xmin=0, 
 xmax=250, 
 xmajorgrids, 
 xlabel={Number of iterations},
 ymin=0, 
 ymax=5.0, 
 yminorticks=true, 
 ymajorgrids, 
 yminorgrids, 
 ylabel={Parameters values},
 legend columns=3, 
 legend style={at={(0.99,0.79)},anchor=north east,cells={anchor=west}}, 
 ]

 \addplot [ 
 color=cyan, 
 dashed, 
 line width=1.0pt, 
 mark size=1.5pt, 
 mark=*, 
 only marks ,
 mark options={solid,fill=black,draw=cyan}, 
 mark repeat={2}, 
 ] 
  file { Figures/data/pg_l2_fistaP.dat };
 \addlegendentry{FISTA};

 \addlegendimage{empty legend} 
 \addlegendentry{}
 
 \addlegendimage{empty legend} 
 \addlegendentry{}

 \addplot [ 
 color=black, 
 dashed, 
 line width=1.0pt, 
 mark size=1.5pt, 
 mark=square*,
 only marks, 
 mark options={solid,fill=white,draw=black}, 
 mark repeat={2}, 
 ] 
   file { Figures/data/pg_l2_ormP.dat };
 \addlegendentry{ORM};

 \addplot [ 
 color=red, 
 dashed, 
 line width=1.0pt, 
 mark size=1.8pt, 
 mark=triangle*, 
 only marks, 
  mark options={solid,fill=white,draw=red}, 
 mark repeat={2}, 
 ] 
    file { Figures/data/pg_l2_oimP.dat };
 \addlegendentry{OIM};

 \addplot [ 
 color=magenta, 
 dashed, 
 line width=1.0pt, 
 mark size=1.5pt, 
 mark=*, 
 only marks, 
  mark options={solid,fill=white,draw=magenta}, 
 mark repeat={2}, 
 ] 
     file { Figures/data/pg_l2_oaimP.dat };
 \addlegendentry{OAIM};

  \end{axis} 
 \end{tikzpicture}
	\caption{$\ell_2$-regularized logistic regression  $F_c$}
    \end{subfigure}
    \caption{Proximal Gradient}\label{fig:ista}
\end{figure}

\subsection{Alternating Direction Method of Multipliers}

Consider the following optimization problem:
\begin{align}
\label{eq:pb-split}
 \min_{x\in\mathbb{R}^N} f(x) + g(Mx) 
\end{align}
with $f,g$ two convex lower semi-continuous functions and $M$ a linear operator. The Alternating Direction Method of Multipliers (ADMM) addresses this problem by performing the following iterations with free parameter $\rho>0$.

{\small
\noindent\begin{minipage}{0.98\columnwidth}
{\bfseries ADMM} \hrulefill
\begin{align*}
u_{k+1} &= \argmin_w \left\{ f(w) + \frac{\rho}{2} \left\| Mw  - v_k + \frac{\lambda_k}{\rho} \right\|^2 \right\} \\
v_{k+1} &= \argmin_w \left\{ g(w) + \frac{\rho}{2} \left\| Mu_{k+1}  - w + \frac{\lambda_k}{\rho} \right\|^2 \right\} \\
\lambda_{k+1} &= \lambda_k  + \rho(Mu_{k+1} - v_{k+1})
\end{align*}
\hrule
\end{minipage}
\smallskip} 


\subsubsection{Accelerations}
\label{ref:admmacc}

From an operator point of view, the iterations of ADMM can be seen as updates on the meta-variable $x_k = \lambda_k + \rho v_k = \lambda_{k-1} + \rho Mu_k $ of an $1/2$-averaged operator $\op{T}_{admm}$ (see \cite{eckstein1992douglas} and references therein for details). This meta-variable is central as it affects the way relaxation and inertia translates for this algorithm.  

\textbf{Relaxation}  While it is fairly evident to see that the relaxed version of the operation writes $x_{k+1} = \eta \op{T}_{admm}(x_k) + (1-\eta)x_k$, it is slightly more complex to derive the effect of relaxation on the algorithm variables ($u_k,v_k,\lambda_k$). Indeed, these variables are computed by a \emph{representation} of the meta-variable that is non-linear. Let us call $\op{J}_v$ the operation giving $v_k$ from $x_k$, then 
{\small
\begin{align}
\label{eq:outer}
v_{k+1} = \op{J}_v (x_{k+1}) = \op{J}_v( \eta \op{T}_{admm}(x_k) + (1-\eta)x_k )  \neq \eta \op{J}_v( \op{T}_{admm}(x_k)) + (1-\eta) \op{J}_v( x_k ).
\end{align}}

This means that, in general\footnote{If either i) $g$ is the indicator function of a linear space, or ii) when $g$ is quadratic; then the representation operation $\op{J}_v$ of Eq.~\eqref{eq:outer} becomes linear and relaxation can be performed as an \emph{outer} modification.} relaxation \emph{cannot} be added directly \emph{on top} of ADMM in the sense performing the standard ADMM update then adding a step of the form $v_{k+1} \leftarrow \eta v_{k+1}  + (1-\eta) v_k$ and $\lambda_{k+1} \leftarrow \eta \lambda_{k+1}  + (1-\eta) \lambda_k$.

Following the operator vision, the canonical relaxation on the ADMM leads to the following iterations (derivations can be found in \cite{eckstein1992douglas}); with $x_k$ being the meta-variable that is F\'ejer monotonous, and used in ORM for instance.

{\small
\noindent\begin{minipage}{0.98\columnwidth}
{\bfseries Relaxed ADMM} \hrulefill
\begin{align*}
u_{k+1} &= \argmin_w \left\{ f(w) + \frac{\rho}{2} \left\| Mw  - z_k + \frac{\lambda_k}{\rho} \right\|^2 \right\} \\
v_{k+1} &= \argmin_w \left\{ g(w) + \frac{\rho}{2} \left\| \eta Mu_{k+1} + (1-\eta)v_k  - w + \frac{\lambda_k}{\rho} \right\|^2 \right\} \\
\lambda_{k+1} &= \lambda_k  + \rho( \eta Mu_{k+1} + (1-\eta)v_k  - v_{k+1}) \\
x_{k+1} =  \lambda_{k+1} + \rho v_{k+1} &
\end{align*}
\hrule
\end{minipage}
\smallskip}

It was noted in  \cite{eckstein1992douglas} that ``experiments [...] suggest that over-relaxation with $\eta \in [1.5,1.8]$ can improve convergence'' without further details. One can also mention \cite{giselsson2016line} based on relaxation tuning by line search. Our ORM, with its particularly stable behavior bridges nicely the literature in this respect.

\textbf{Inertia} As previously, \emph{inertial ADMM} cannot be derived simply by adding inertia \emph{on top} of the above iterations. Following the operator vision, the canonical inertial version of the ADMM leads to the following iterations; with $x_k$ and $y_k$ being the meta-variables as in Eq.~\eqref{eq:acc}. See Apx.~\ref{apx:inadmm} for the derivation. To the best of our knowledge, this is an original algorithm.

{\small
\noindent\begin{minipage}{0.98\columnwidth}
{\bfseries Inertial ADMM} \hrulefill
\begin{align*}
u_{k+1} &= \argmin_w \left\{ f(w) + \frac{\rho}{2} \left\| Mw  - v_k + \frac{\lambda_k}{\rho} \right\|^2 \right\} \\
x_{k+1} = \lambda_k &+ \rho Mu_{k+1}  \\
v_{k+1} &= \argmin_w \left\{ g(w) + \frac{\rho}{2} \left\|  Mu_{k+1} - w + \frac{\lambda_k}{\rho}  + \gamma \left( M (u_{k+1} - u_k) + \frac{\lambda_{k} - \lambda_{k-1}}{\rho} \right) \right\|^2 \right\} \\
\lambda_{k+1} &= \lambda_k  + \rho(  Mu_{k+1}  - v_{k+1})  +  \gamma \rho \left(  M (u_{k+1} - u_k)+ \frac{\lambda_{k} - \lambda_{k-1}}{\rho} \right) \\
y_{k+1} =  \lambda_{k+1} &+ \rho v_{k+1} 
\end{align*}
\hrule
\end{minipage}}

As for relaxation, if  $g$ is either i) the indicator function of a linear space, or ii) quadratic; inertia can be performed as an \emph{outer} modification. Note that ADMM + outer inertia with Nesterov-like parameter sequence corresponds to the algorithm named \emph{Fast ADMM} studied in \cite{goldstein2014fast}. However, this algorithm is not convergent in the general case, unless a restart scheme is added. Interestingly, for the convergence proof of Fast ADMM in the strongly convex case, $g$ is assumed quadratic.

\textbf{Alternated Inertia} It simply consists in alternating an iteration of ADMM with an iteration of Inertia ADMM. One can remark, that with this proper formulation of relaxed and inertial ADMM, applying inertia or relaxation every other iteration provably gives the same algorithm (use the fact that $\lambda_{k} - \lambda_{k-1} = (Mu_k-v_k)/\rho$ for a standard ADMM iteration in the Inertial ADMM iteration). To the best of our knowledge, this kind of algorithm has never been considered before.

\subsubsection{Numerical illustrations}

In Fig.~\ref{fig:admm}, we compare i) the standard ADMM; ii) our three proposed online methods; and iii) Fast ADMM with restart \cite{goldstein2014fast}.  In all cases, the ADMM parameter $\rho$ was set to $1$. For logistic regression functions $f_b$ and $f_c$, no explicit formulation of the update of the first variable is available so their have to be computed by an external solver (\texttt{SciPy}'s general-purpose \texttt{minimize} function in our case). 
We observe that, once again, the proposed online methods show remarkable performance for their computational cost. OIM performs best; however, ORM and OAIM, contrary to OIM and Fast ADMM show \emph{steady} parameter sequences, this can be seen as \emph{more monotonous behaviors}. Finally, ORM offers a better alternative to arbitrarily fixed relaxation.

\begin{figure}
    \centering
    \begin{subfigure}{\textwidth}
        \begin{tikzpicture}[scale=0.48]

 \begin{axis}[ 
 width=\columnwidth, 
 height=0.5\columnwidth, 
 xmin=0, 
 xmax=50, 
  xlabel={Number of iterations},
 xmajorgrids, 
 ymode=log,
 ymin=5e-16, 
 ymax=2, 
 yminorticks=true, 
 ymajorgrids, 
 yminorgrids, 
 ylabel={Functional error }, 
 legend columns=2, 
 legend style={at={(0.99,0.99)},anchor=north east,cells={anchor=west}}, 
 ]

 \addplot [ 
 color=blue, 
 solid, 
 line width=1.0pt, 
 mark size=2.5pt, 
 mark=none, 
 mark options={solid,fill=white,draw=blue}, 
 mark repeat={5}, 
 ] 
  file { Figures/data/admm_lasso_orig.dat };
 \addlegendentry{ADMM};

 \addplot [ 
 color=black,  
 line width=1.0pt, 
 mark size=1.5pt, 
 mark=square*,
 only marks , 
 mark options={solid,fill=white,draw=black}, 
 mark repeat={1}, 
 ] 
  file { Figures/data/admm_lasso_orm.dat };
 \addlegendentry{ORM};

 \addplot [ 
 color=red, 
 dashed, 
 line width=1.0pt, 
 mark size=1.8pt, 
 mark=triangle*,
 only marks , 
 mark options={solid,fill=white,draw=red}, 
 mark repeat={1}, 
 ] 
  file { Figures/data/admm_lasso_oim.dat };
 \addlegendentry{OIM};

 \addplot [ 
 color=magenta, 
 dashed, 
 line width=1.0pt, 
 mark size=1.5pt, 
 mark=*, 
 only marks ,
 mark options={solid,fill=white,draw=magenta}, 
 mark repeat={1}, 
 ] 
  file { Figures/data/admm_lasso_oaim.dat };
 \addlegendentry{OAIM};

 \addplot [ 
 color=cyan, 
 dashed, 
 line width=1.0pt, 
 mark size=1.5pt, 
 mark=*, 
 only marks ,
 mark options={solid,fill=black,draw=cyan}, 
 mark repeat={1}, 
 ] 
   file { Figures/data/admm_lasso_fista.dat };
 \addlegendentry{Fast ADMM};

  \end{axis} 
 \end{tikzpicture}
	\begin{tikzpicture}[scale=0.48]

 \begin{axis}[ 
 width=\columnwidth, 
 height=0.5\columnwidth, 
 xmin=0, 
 xmax=50, 
 xmajorgrids, 
 xlabel={Number of iterations},
 ymin=0, 
 ymax=2.8, 
 yminorticks=true, 
 ymajorgrids, 
 yminorgrids, 
 ylabel={Parameters values},
 legend columns=3, 
 legend style={at={(0.99,0.9)},anchor=north east,cells={anchor=west}}, 
 ]

 \addplot [ 
 color=black, 
 dashed, 
 line width=1.0pt, 
 mark size=1.5pt, 
 mark=square*,
 only marks, 
 mark options={solid,fill=white,draw=black}, 
 mark repeat={1}, 
 ] 
   file { Figures/data/admm_lasso_ormP.dat };
 \addlegendentry{ORM};

 \addplot [ 
 color=red, 
 dashed, 
 line width=1.0pt, 
 mark size=1.8pt, 
 mark=triangle*, 
 only marks, 
  mark options={solid,fill=white,draw=red}, 
 mark repeat={1}, 
 ] 
   file { Figures/data/admm_lasso_oimP.dat };
 \addlegendentry{OIM};

 \addplot [ 
 color=magenta, 
 dashed, 
 line width=1.0pt, 
 mark size=1.5pt, 
 mark=*, 
 only marks, 
  mark options={solid,fill=white,draw=magenta}, 
 mark repeat={1}, 
 ] 
  file { Figures/data/admm_lasso_oaimP.dat };
 \addlegendentry{OAIM};

 \addplot [ 
 color=cyan, 
 dashed, 
 line width=1.0pt, 
 mark size=1.5pt, 
 mark=*, 
 only marks ,
 mark options={solid,fill=black,draw=cyan}, 
 mark repeat={1}, 
 ] 
  file { Figures/data/admm_lasso_fistaP.dat };
 \addlegendentry{Fast ADMM};

  \end{axis} 
 \end{tikzpicture}
	\caption{lasso  $F_a$}
    \end{subfigure}\\
    \begin{subfigure}{\textwidth}
        \begin{tikzpicture}[scale=0.48]

 \begin{axis}[ 
 width=\columnwidth, 
 height=0.5\columnwidth, 
 xmin=0, 
 xmax=40, 
  xlabel={Number of iterations},
 xmajorgrids, 
 ymode=log,
 ymin=5e-11, 
 ymax=2, 
 yminorticks=true, 
 ymajorgrids, 
 yminorgrids, 
 ylabel={Functional error }, 
 legend columns=2, 
 legend style={at={(0.99,0.99)},anchor=north east,cells={anchor=west}}, 
 ]

 \addplot [ 
 color=blue, 
 solid, 
 line width=1.0pt, 
 mark size=2.5pt, 
 mark=none, 
 mark options={solid,fill=white,draw=blue}, 
 mark repeat={5}, 
 ] 
  file { Figures/data/admm_l1_orig.dat };
 \addlegendentry{ADMM};

 \addplot [ 
 color=black,  
 line width=1.0pt, 
 mark size=1.5pt, 
 mark=square*,
 only marks , 
 mark options={solid,fill=white,draw=black}, 
 mark repeat={1}, 
 ] 
  file { Figures/data/admm_l1_orm.dat };
 \addlegendentry{ORM};

 \addplot [ 
 color=red, 
 dashed, 
 line width=1.0pt, 
 mark size=1.8pt, 
 mark=triangle*,
 only marks , 
 mark options={solid,fill=white,draw=red}, 
 mark repeat={1}, 
 ] 
  file { Figures/data/admm_l1_oim.dat };
 \addlegendentry{OIM};

 \addplot [ 
 color=magenta, 
 dashed, 
 line width=1.0pt, 
 mark size=1.5pt, 
 mark=*, 
 only marks ,
 mark options={solid,fill=white,draw=magenta}, 
 mark repeat={1}, 
 ] 
  file { Figures/data/admm_l1_oaim.dat };
 \addlegendentry{OAIM};

 \addplot [ 
 color=cyan, 
 dashed, 
 line width=1.0pt, 
 mark size=1.5pt, 
 mark=*, 
 only marks ,
 mark options={solid,fill=black,draw=cyan}, 
 mark repeat={1}, 
 ] 
   file { Figures/data/admm_l1_fista.dat };
 \addlegendentry{Fast ADMM};

  \end{axis} 
 \end{tikzpicture}
	\begin{tikzpicture}[scale=0.48]

 \begin{axis}[ 
 width=\columnwidth, 
 height=0.5\columnwidth, 
 xmin=0, 
 xmax=40, 
 xmajorgrids, 
 xlabel={Number of iterations},
 ymin=0, 
 ymax=2.5, 
 yminorticks=true, 
 ymajorgrids, 
 yminorgrids, 
 ylabel={Parameters values},
 legend columns=3, 
 legend style={at={(0.99,0.6)},anchor=north east,cells={anchor=west}}, 
 ]

 \addplot [ 
 color=black, 
 dashed, 
 line width=1.0pt, 
 mark size=1.5pt, 
 mark=square*,
 only marks, 
 mark options={solid,fill=white,draw=black}, 
 mark repeat={1}, 
 ] 
   file { Figures/data/admm_l1_ormP.dat };
 \addlegendentry{ORM};

 \addplot [ 
 color=red, 
 dashed, 
 line width=1.0pt, 
 mark size=1.8pt, 
 mark=triangle*, 
 only marks, 
  mark options={solid,fill=white,draw=red}, 
 mark repeat={1}, 
 ] 
   file { Figures/data/admm_l1_oimP.dat };
 \addlegendentry{OIM};

 \addplot [ 
 color=magenta, 
 dashed, 
 line width=1.0pt, 
 mark size=1.5pt, 
 mark=*, 
 only marks, 
  mark options={solid,fill=white,draw=magenta}, 
 mark repeat={1}, 
 ] 
  file { Figures/data/admm_l1_oaimP.dat };
 \addlegendentry{OAIM};

 \addplot [ 
 color=cyan, 
 dashed, 
 line width=1.0pt, 
 mark size=1.5pt, 
 mark=*, 
 only marks ,
 mark options={solid,fill=black,draw=cyan}, 
 mark repeat={1}, 
 ] 
  file { Figures/data/admm_l1_fistaP.dat };
 \addlegendentry{Fast ADMM};

  \end{axis} 
 \end{tikzpicture}
	\caption{ $\ell_1$-regularized logistic regression $F_b$}
    \end{subfigure}\\
    \begin{subfigure}{\textwidth}
        \begin{tikzpicture}[scale=0.48]

 \begin{axis}[ 
 width=\columnwidth, 
 height=0.5\columnwidth, 
 xmin=0, 
 xmax=125, 
  xlabel={Number of iterations},
 xmajorgrids, 
 ymode=log,
 ymin=5e-11, 
 ymax=2, 
 yminorticks=true, 
 ymajorgrids, 
 yminorgrids, 
 ylabel={Functional error }, 
 legend columns=2, 
 legend style={at={(0.99,0.99)},anchor=north east,cells={anchor=west}}, 
 ]

 \addplot [ 
 color=blue, 
 solid, 
 line width=1.0pt, 
 mark size=2.5pt, 
 mark=none, 
 mark options={solid,fill=white,draw=blue}, 
 mark repeat={5}, 
 ] 
  file { Figures/data/admm_l2_orig.dat };
 \addlegendentry{ADMM};

 \addplot [ 
 color=black,  
 line width=1.0pt, 
 mark size=1.5pt, 
 mark=square*,
 only marks , 
 mark options={solid,fill=white,draw=black}, 
 mark repeat={1}, 
 ] 
  file { Figures/data/admm_l2_orm.dat };
 \addlegendentry{ORM};

 \addplot [ 
 color=red, 
 dashed, 
 line width=1.0pt, 
 mark size=1.8pt, 
 mark=triangle*,
 only marks , 
 mark options={solid,fill=white,draw=red}, 
 mark repeat={1}, 
 ] 
  file { Figures/data/admm_l2_oim.dat };
 \addlegendentry{OIM};

 \addplot [ 
 color=magenta, 
 dashed, 
 line width=1.0pt, 
 mark size=1.5pt, 
 mark=*, 
 only marks ,
 mark options={solid,fill=white,draw=magenta}, 
 mark repeat={1}, 
 ] 
  file { Figures/data/admm_l2_oaim.dat };
 \addlegendentry{OAIM};

 \addplot [ 
 color=cyan, 
 dashed, 
 line width=1.0pt, 
 mark size=1.5pt, 
 mark=*, 
 only marks ,
 mark options={solid,fill=black,draw=cyan}, 
 mark repeat={1}, 
 ] 
   file { Figures/data/admm_l2_fista.dat };
 \addlegendentry{Fast ADMM};

  \end{axis} 
 \end{tikzpicture}
	\begin{tikzpicture}[scale=0.48]

 \begin{axis}[ 
 width=\columnwidth, 
 height=0.5\columnwidth, 
 xmin=0, 
 xmax=125, 
 xmajorgrids, 
 xlabel={Number of iterations},
 ymin=0, 
 ymax=5.0, 
 yminorticks=true, 
 ymajorgrids, 
 yminorgrids, 
 ylabel={Parameters values},
 legend columns=3, 
 legend style={at={(0.99,0.8)},anchor=north east,cells={anchor=west}}, 
 ]

 \addplot [ 
 color=black, 
 dashed, 
 line width=1.0pt, 
 mark size=1.5pt, 
 mark=square*,
 only marks, 
 mark options={solid,fill=white,draw=black}, 
 mark repeat={1}, 
 ] 
   file { Figures/data/admm_l2_ormP.dat };
 \addlegendentry{ORM};

 \addplot [ 
 color=red, 
 dashed, 
 line width=1.0pt, 
 mark size=1.8pt, 
 mark=triangle*, 
 only marks, 
  mark options={solid,fill=white,draw=red}, 
 mark repeat={1}, 
 ] 
   file { Figures/data/admm_l2_oimP.dat };
 \addlegendentry{OIM};

 \addplot [ 
 color=magenta, 
 dashed, 
 line width=1.0pt, 
 mark size=1.5pt, 
 mark=*, 
 only marks, 
  mark options={solid,fill=white,draw=magenta}, 
 mark repeat={1}, 
 ] 
  file { Figures/data/admm_l2_oaimP.dat };
 \addlegendentry{OAIM};

 \addplot [ 
 color=cyan, 
 dashed, 
 line width=1.0pt, 
 mark size=1.5pt, 
 mark=*, 
 only marks ,
 mark options={solid,fill=black,draw=cyan}, 
 mark repeat={1}, 
 ] 
  file { Figures/data/admm_l2_fistaP.dat };
 \addlegendentry{Fast ADMM};

  \end{axis} 
 \end{tikzpicture}
	\caption{$\ell_2$-regularized logistic regression  $F_c$}
    \end{subfigure}
    \caption{Alternating Direction Method of Multipliers (ADMM)}\label{fig:admm}
\end{figure}






\subsection{a Primal Dual Algorithm}
We investigate the primal-dual algorithm 3.1 from \cite{condat2013primal} with $F=0$. For this algorithm, we will consider only\footnote{for the other two problems, the algorithm boils down to previously investigated ADMM.} the lasso problem $F_a$ as it can be implemented so that, contrary to the ADMM, no matrix inversion is performed, with $M=A$, $g(\cdot) = 1/2 \| \cdot - b\|^2 $ and $f(\cdot) = \lambda \|\cdot\|_1 = g_a(\cdot)$. We chose $\tau=0.5$ and $\sigma= 1/(\tau \|A\|^2)$ as prescribed. 

{\small
\noindent\begin{minipage}{0.98\columnwidth}
{\bfseries a Primal-Dual algorithm \cite[Alg. 3.1]{condat2013primal} for $ \min_x f(x) + g(Mx) $ }  \hrulefill
\begin{align*}
u_{k+1} &= \argmin_w \left\{ f(w) + \frac{1}{2\tau} \left\| w  - u_k + \tau M^{\textrm{T}} \lambda_k \right\|^2 \right\} \\
\lambda_{k+1} &= \lambda_k + \sigma M (2u_{k+1} - u_k) - \sigma \argmin_w \left\{ h(w) + \frac{\sigma}{2} \left\| w - \frac{\lambda_k}{\sigma}  -M (2u_{k+1} - u_k)\right\|^2 \right\} 
\end{align*}
\hrule
\end{minipage}}
\smallskip

With the prescribed choice of parameters, defining $x_{k} = [u_k;\lambda_k]$ as the stacked vector of the variables, the algorithm is a fixed point algorithm on $x_k$ with an $1/2$-averaged operator. Relaxation and Inertia can be simply performed as outer-modifications of the algorithm.

In Fig.~\ref{fig:pd}, we plot the functional error and the parameters for the original algorithm and our three online methods. The formulation of all algorithms are again quite simple and we obtain significant speed improvements.

\begin{figure}


    \centering
        \input{Figures/pd.tikz}
	\input{Figures/pd_step.tikz}
    \caption{a Primal-Dual algorithm on a lasso problem}\label{fig:pd}
\end{figure}


\section{Conclusion} 
\label{sec:cl} 

In this paper, we investigated the theoretical and practical interests of relaxation and inertia on averaged operators. Notably, we established the expression for optimal parameters and rate when possible and built upon it to propose novel online methods. Numerical illustrations have demonstrated the behavioral differences between relaxation and inertia and showed the remarkable performance of the proposed online methods.

\bibliographystyle{siamplain}
\bibliography{math}

\appendix

\newpage

\section{Proof of the linear behavior of affine averaged operators (Sec.~\ref{sec:linear})}
\label{apx:linear}

We consider the fixed point iterations $x_{k+1} = \mathsf{T}( x_k) = Rx_k +d$ with $\mathsf{T} = R\cdot + d $ an affine $\alpha$-averaged operator. We assume that  $\fix\mathsf{T} \neq \emptyset$ that is, $d$ lives in the column space of $I-R$. 

Let us denote by $\mathcal{N}$ the nullspace of $I-R$: $\mathcal{N} \triangleq \left\{ x \in \mathbb{R}^N : Rx = x \right\} $. Any fixed point of $\mathsf{T}$ can be expressed as one particular fixed point plus a vector in $\mathcal{N}$. 

Consider the Jordan decomposition of matrix $R$: $R = W \Lambda W^{-1}$ with $W$ a non-singular matrix and $\Lambda$ the Jordan block-diagonal for $R$ (see \cite[Chap.~3]{HorJoh94}). The proof of Lemma~\ref{lem:cv} (see \cite[Prop.~5.15]{livre-combettes}) tells that $ \sum_{k=0}^{+\infty} \left\| x_{k} - \mathsf{T} (x_{k}) \right\|^2 < \infty $ so
\begin{align*}
\sum_{k=0}^{+\infty} \left\| R_k(R-I)x^0 + R_k d \right\|^2  = \sum_{k=0}^{+\infty} \left\| W \left(  \Lambda_k(\Lambda-I) W^{-1}x^0 + \Lambda_k W^{-1} d \right) \right\|^2 < \infty .
\end{align*}

From the last line, we can deduce that:
\begin{itemize}
\item[i)] the eigenvalues of $R$ are smaller than $1$ in magnitude and $1$ is the only one with this magnitude;
\item[ii)] the algebraic and geometric multiplicities of eigenvalue $1$ coincide as the Jordan form of $R$ does not have block of the form  {\scriptsize
$ J_1 = \left[
\begin{array}{cccc}
1 & 1 & & \\
 & \ddots & \ddots & \\
& & \ddots & 1 \\
&  & & 1
\end{array}
\right] $}. Indeed, if it had one could take  $x^0$ so that the terms in the sum are bounded away from zero (e.g. take $J = [ 1 ~ 1 ;  0 ~ 1]$, then $J_k(J-I) = [ 0 ~ 1 ; 0 ~ 0 ] $).
\end{itemize}

Thus, one can write $R = \left[ W_1 ~ W_2 \vphantom{\begin{array}{c} \underline{W_1}^* \\ \underline{W_2}^* \end{array}} \right] \left[ \begin{array}{cc} I & \\ & \tilde{\Lambda} \end{array} \right] \left[ \begin{array}{c} \underline{W_1}^* \\ \underline{W_2}^* \end{array} \right]$ where:
\begin{itemize}
\item $\tilde{\Lambda}$ is the block diagonal matrix of the Jordan blocks corresponding to the eigenvalues of $R$ with magnitude strictly smaller than $1$;
\item and {\small
$ \hspace*{-0.1cm} \left[ \begin{array}{c} \underline{W_1}^* \\ \underline{W_2}^* \end{array} \right]   \left[ W_1 ~ W_2 \vphantom{\begin{array}{c} \underline{W_1}^* \\ \underline{W_2}^* \end{array}} \right]  = \left[ 
\begin{array}{cc}
\underline{W_1}^* W_1  & \underline{W_1}^* W_2 \\
\underline{W_2}^* W_1  & \underline{W_2}^* W_2
\end{array}
 \right] 
= \left[ 
\begin{array}{cc}
I  & \mathit{0} \\
 \mathit{0} & I
\end{array}
 \right] .
$}
\end{itemize}

From the previous result, $ R = W_1 \underline{W_1}^* + W_2 \tilde{\Lambda} \underline{W_2}^* $ where conveniently $\Pi_{\mathcal N} \triangleq W_1 \underline{W_1}^*$ defines a projection onto ${\mathcal N}$.

Define $\overline{\Pi_{\mathcal N}} \triangleq I - \Pi_{\mathcal N}$ the complementary projection. Let $\bar{x}\in\fix\mathsf{T}$ and define $\Delta_k \triangleq \overline{\Pi_{\mathcal N}}(x_k - \bar{x})$ for all $k>0$. We have
\begin{align}
\nonumber \Delta_{k+1} &\triangleq \overline{\Pi_{\mathcal N}}(x_{k+1} - \bar{x}) =   \overline{\Pi_{\mathcal N}} R (x_k - \bar{x}) \\
 &= W_2 \tilde{\Lambda} \underline{W_2}^*  \underbrace{\overline{\Pi_{\mathcal N}} (x_k - \bar{x}) }_{\Delta_k} = \label{eq:recur} W_2  \tilde{\Lambda}^k \underline{W_2}^*  \Delta^0
\end{align}

Thus $(\Delta_k)_{k>0}$ vanishes exponentially as a consequence of  \cite[Cor.~5.6.14]{HorJoh94} on Eq.~\eqref{eq:recur} which states that there is constant $C\in\mathbb{R}^+$ such that \\ $ \| \Delta_{k} \| \leq C k^n \rho( W_2  \tilde{\Lambda} \underline{W_2}^*)^k $ where the factor $k^n$ stems from the $k$-th power of the Jordan decomposition of $R$ which introduces terms of the form $\nu^k k^\ell$. Using the $\infty$ norm and taking the $\log$ gives the stated result (see \cite[Sec. III-C]{iutzeler2013analysis} or \cite[Chap 3.2.5]{HorJoh94}). Recalling that $ \tilde{\Lambda}$ contains the Jordan blocks associated to the non-unit eigenvalues of $R$, which are all strictly smaller than $1$ in magnitude, $\rho( W_2  \tilde{\Lambda} \underline{W_2}^*)  = \nu < 1 $.  

Finally, we can notice that  $\Pi_{\mathcal N} (x_{k+1}  - x_k) = 0$, and thus  
$$v_{k} = \frac{\|x_{k+1}-x_k\|}{\|x_{k} - x_{k-1}\|} =  \frac{\|\overline{\Pi_{\mathcal N}}(x_{k+1}-x_k)\|}{\|\overline{\Pi_{\mathcal N}}(x_{k} - x_{k-1})\|}  = \frac{\| W_2  \tilde{\Lambda} \underline{W_2}^*  \overline{\Pi_{\mathcal N}}(x_{k}-x_{k-1})\|}{\|\overline{\Pi_{\mathcal N}}(x_{k} - x_{k-1})\|} \leq \|\tilde{\Lambda}  \|  $$
where the inequality tends to be sharper as $k$ grows and $\nu \leq \|\tilde{\Lambda}  \| \leq 1$.

\section{Proof of Lemma~\ref{lem:cvalt}}
\label{apx:cvalt}

Let $\bar{x} \in \fix \mathsf{T}$, and take $k$ even, then $ x_{k+2} = \op{T}\left(  \op{T}(x_k) + \gamma_{k+1}(\op{T}(x_k) - x_k)  \right)$.
\begin{align*}
 & \left\| x_{k+2} -  \bar{x} \right\|^2  =  \left\| \op{T}\left(  \op{T}(x_k) + \gamma_{k+1}(\op{T}(x_k) - x_k)  \right) -  \op{T}(\bar{x}) \right\|^2 \\
&\leq \left\|   \op{T}(x_k)  + \gamma_{k+1}(\op{T}(x_k) - x_k)  -  \bar{x}  \right\|^2  - \frac{1-\alpha}{\alpha} \left\|   \op{T}(x_k) + \gamma_{k+1}(\op{T}(x_k) - x_k)   -  x_{k+2} \right\|^2 \\
&= (1+\gamma_{k+1}) \left\|  \op{T}(x_k)  - \bar{x} \right\|^2  - \gamma_{k+1} \left\|   x_k  -  \bar{x}  \right\|^2 +(1+\gamma_{k+1})\gamma_{k+1} \left\|  \op{T}(x_k)  - x_k \right\|^2 \\
&  \hspace*{0.5cm} - \frac{1-\alpha}{\alpha} \left\|   \op{T}(x_k) + \gamma_{k+1}(\op{T}(x_k) - x_k)   -  x_{k+2} \right\|^2 \\
&\leq (1+\gamma_{k+1}) \left\|  x_k  - \bar{x} \right\|^2   - \gamma_{k+1} \left\|   x_k  -  \bar{x}  \right\|^2 - (1+\gamma_{k+1})\frac{1-\alpha}{\alpha} \left\|  \op{T}(x_k)  - x_{k} \right\|^2 \\
& \hspace*{0.5cm} +(1+\gamma_{k+1})\gamma_{k+1} \left\|  \op{T}(x_k)  - x_k \right\|^2 - \frac{1-\alpha}{\alpha} \left\|   \op{T}(x_k) + \gamma_{k+1}(\op{T}(x_k) - x_k)   -  x_{k+2} \right\|^2 \\
&= \left\|  x_k  - \bar{x} \right\|^2  - (1+\gamma_{k+1})\left( \frac{1-\alpha}{\alpha} -\gamma_{k+1} \right) \left\|  \op{T}(x_k)  - x_{k} \right\|^2 \\
&  \hspace*{0.5cm} - \frac{1-\alpha}{\alpha} \left\|   \op{T}(x_k) + \gamma_{k+1}(\op{T}(x_k) - x_k)   -  x_{k+2} \right\|^2 
\end{align*}
where we used successively: i) the fact that $\op{T}$ is $\alpha$-averaged; ii) the equality of \cite[Cor.~2.14]{livre-combettes}; iii) a second time that $\op{T}$ is $\alpha$-averaged. The assumption on the sequence $(\gamma_k)$ makes the second term negative or null hence it can be dropped.

\noindent We notice that $\| x_{k+2} -  \bar{x} \|^2 \leq \left\|  x_k  - \bar{x} \right\|^2 - \frac{1-\alpha}{\alpha} \left\|   \op{T}(x_k) + \gamma_{k+1}(\op{T}(x_k) - x_k)   -  x_{k+2} \right\|^2 $ implies that the sequence of the \emph{even}  $(\| x_{2k} -  \bar{x} \|^2)_{k>0}$ is decreasing and non-negative, it is thus convergent and the $(x_{2k})_{k>0}$ are bounded. Furthermore,  
$$ \sum_{k=0}^\infty \left\|   \op{T}(x_{2k}) + \gamma_{2k+1}(\op{T}(x_{2k}) - x_{2k})   -  x_{2(k+1)} \right\|^2 < \infty   $$
implies that any limit point of the sequence $(x_{2k})_{k>0}$ belongs to $\fix \op{T}$. 

Let us now take $x^\star$, a limit point of $(x_{2k})_{k>0}$, then $(\| x_{2k} -  {x}^\star \|^2)_{k>0}$ converges and its limit is $\lim_{k\to \infty} \| x_{2k} -  {x}^\star \|^2 = 0$ which means that $x^\star$ is unique. Finally, using non-expansivity, we get that the \emph{odd} sequence also converges to the same point $x^\star$.

\section{Derivation of Inertial ADMM (Sec.~{ref:admmacc})}
\label{apx:inadmm}

The derivations follow nearly the same steps as the one of relaxed ADMM in \cite{eckstein1992douglas} thus we will abridge the common parts. We build upon the ADMM-generating Lions-Mercier operator:
\begin{align*}
 \mathsf{T}_{admm}  = \left\{ \left( \lambda +\rho v , w + \rho v)   \right) : (  u, -{M}^\tran w )\in  \partial f  ;  (v,\lambda)\in\partial g ; w - \rho {M} u = \lambda - \rho v  \right\} .
\end{align*}
but we will consider an \emph{inertial version} of the proximal point algorithm\footnote{we chose to perform the operator \emph{then} the inertia for the sake of clarity and consistency in the derivations.}:
\begin{equation*}
\left\{ \begin{array}{l} x_{k+1} =  \mathsf{T}_{admm} (y_k) \\
y_{k+1} = x_{k+1} + \gamma (x_{k+1} - x_{k} ) \end{array} \right.
\end{equation*}

\noindent {\itshape Representation step}: The input, $y_k$, writes uniquely as $\lambda_k + \rho v_k$ from the representation lemma:
\begin{equation}
\label{eq_opt:inreadmm}
 y_k =  \lambda_k + \rho v_k.
\end{equation}

\noindent {\itshape Mapping step}: The definition of  $  \mathsf{T}_{admm} $ implies that $\lambda_k - \rho v_k$ writes uniquely as $w-\rho {M}u$ with $(u,-{M}^\tran w)\in \partial f$ :
\begin{equation}
\label{eq_opt:inmapadmm}
 w_{k+1} - \rho {M} u_{k+1} = \lambda_k - \rho v_k. 
\end{equation}
Secondly, the output of the resolvent is:
\begin{equation}
\label{eq_opt:inmapadmm2}
x_{k+1} =  w_{k+1} + \rho v_k = \lambda_k + \rho M u_{k+1}.
\end{equation}

\noindent {\itshape Re-representation step}: Here, the proof is a bit different in the \emph{inertial case} as one has to find the values of $\lambda_{k+1}$ and $v_{k+1}$ with $(v_{k+1},\lambda_{k+1}) \in \partial g$, so that $y_{k+1} = x_{k+1} + \gamma( x_{k+1} - x_k )$ writes uniquely as:
\begin{equation}
\label{eq_opt:inrerepadmm}
 y_{k+1} =  \lambda_{k+1} + \rho v_{k+1}.
\end{equation}

Writing Eq.~(\ref{eq_opt:inmapadmm}) of the mapping step, leads to the same step \emph{as for classical ADMM}:
 \begin{eqnarray*}
 & & w_{k+1} - \rho {M} u_{k+1} = \lambda_k - \rho v_k  ~~~ \textrm{ with }  ~~~  (u_{k+1}, -{M}^\tran  w_{k+1}) \in \partial f  \\
&\Rightarrow&  u_{k+1} = \argmin_u \left\{ f (u)  +  \frac{\rho}{2} \left\| {M} u - v_k + \frac{\lambda_k}{\rho} \right\|^2 \right\} .
\end{eqnarray*}

Now, combining Eqs.~(\ref{eq_opt:inmapadmm2}) and  (\ref{eq_opt:inrerepadmm}), we have \emph{(different from classical ADMM)}
 \begin{eqnarray*}
 & &  \lambda_{k+1} + \rho v_{k+1} =  \lambda_k + \rho {M} u_{k+1} + \gamma (  \lambda_k + \rho {M} u_{k+1}  -  (\lambda_{k-1} + \rho {M} u_{k}) )  ~~~ \textrm{ with }  ~~~  (v_{k+1},  \lambda_{k+1}) \in \partial g\\
 &\Rightarrow& \lambda_k + \rho {M} u_{k+1} + \gamma (  \lambda_k + \rho {M} u_{k+1}  -  (\lambda_{k-1} + \rho {M} u_{k}) )  -   \rho v_{k+1} =  \lambda_{k+1} \in  \partial g (v_{k+1})  \\
 &\Rightarrow& 0   \in  \partial g (v_{k+1}) - \rho  \left(  {M} u_{k+1}  -  v_{k+1} + \frac{\lambda_k}{\rho} + \gamma \left( {M} u_{k+1} + \frac{\lambda_k}{\rho}  -  {M} u_{k} - \frac{\lambda_{k-1}}{\rho} \right)    \right) \\
 &\Rightarrow& v_{k+1} = \argmin_v \left\{ g (v)  + \frac{\rho}{2}  \left\|  {M} u_{k+1}  -  v + \frac{\lambda_k}{\rho} + \gamma \left( {M} u_{k+1} + \frac{\lambda_k}{\rho}  -  {M} u_{k} - \frac{\lambda_{k-1}}{\rho} \right)   \right\|^2 \right\} .
\end{eqnarray*}
and the first line also tells us that
$$  \lambda_{k+1} =  \lambda_k +  \rho \left( {M} u_{k+1} - v_{k+1} \right)  + \gamma \left(  \lambda_k + \rho {M} u_{k+1}  -  (\lambda_{k-1} + \rho {M} u_{k}) \right)  $$ 
which we can identify as the iterations of \emph{Inertial ADMM}.

\end{document}